\newtheorem{theorem}{Theorem}[section]
\newtheorem{definition}[theorem]{Definition}
\newtheorem{lemma}[theorem]{Lemma}
\newtheorem{corollary}[theorem]{Corollary}
\theoremstyle{definition}
\title{Some tight lower bounds for Tur\'{a}n problems via constructions of multi-hypergraphs}
\author{Zixiang Xu$^{\text{a}}$, Tao Zhang$^{\text{b,}}$\thanks{Research was supported by the National Natural Science Foundation of China under Grant No. 11801109.}~ and  Gennian Ge$^{\text{a,}}$\thanks{Corresponding author (e-mail: gnge@zju.edu.cn). Research was supported by the National Natural Science Foundation of China under Grant No. 11971325 and Beijing Scholars
Program.}\\
\footnotesize $^{\text{a}}$ School of Mathematics Sciences, Capital Normal University, Beijing 100048, China.\\
\footnotesize $^{\text{b}}$ School of Mathematics and Information Science, Guangzhou University, Guangzhou 510006, China.\\}
\begin{document}

\date{}

\maketitle

\begin{abstract}
Recently, several hypergraph Tur\'{a}n problems were solved by the powerful random algebraic method. However, the random algebraic method usually requires some parameters to be very large, hence we are concerned about how these Tur\'{a}n numbers depend on such large parameters of the forbidden hypergraphs. In this paper, we determine the dependence on such specified large constant for several hypergraph Tur\'{a}n problems. More specifically, for complete $r$-partite $r$-uniform hypergraphs, we show that if $s_{r}$ is sufficiently larger than $s_{1},s_{2},\ldots,s_{r-1},$ then
\begin{equation*}
  \textup{ex}_{r}(n,K_{s_{1},s_{2},\ldots,s_{r}}^{(r)})=\Theta(s_{r}^{\frac{1}{s_{1}s_{2}\cdots s_{r-1}}}n^{r-\frac{1}{s_{1}s_{2}\cdots s_{r-1}}}).
\end{equation*}
For complete bipartite $r$-uniform hypergraphs, we prove that if $s$ is sufficiently larger than $t,$ we have
\begin{equation*}
  \textup{ex}_{r}(n,K_{s,t}^{(r)})=\Theta(s^{\frac{1}{t}}n^{r-\frac{1}{t}}).
\end{equation*}
In particular, our results imply that the famous K\H{o}v\'{a}ri--S\'{o}s--Tur\'{a}n's upper bound $\textup{ex}(n,K_{s,t})=O(t^{\frac{1}{s}}n^{2-\frac{1}{s}})$ has the correct dependence on large $t$. The main approach is to construct random multi-hypergraph via a variant of random algebraic method.

\medskip
\noindent {{\it Key words and phrases\/}: Random algebraic construction, multi-hypergraphs.}

\smallskip

\noindent {{\it AMS subject classifications\/}: 05C35, 05C65, 05C80.}
\end{abstract}

\section{Introduction}
The study of Tur\'{a}n problems is one of the essential ingredients in extremal graph theory. In $1907,$ Mantel \cite{1907Mantel} first showed that every $n$-vertex graph with more than $\frac{n^{2}}{4}$ edges contains a triangle. Later Tur\'{a}n \cite{1941Turan} generalized this result to $K_{\ell}$-free graphs for $\ell\geqslant 4$. For general graph $H,$ Erd\H{o}s and Stone \cite{1946ErodsBAMS} gave the asymptotic results for the Tur\'{a}n number $\textup{ex}(n,H)$. However, to determine the exact asymptotic results for $\textup{ex}(n,H)$ is challenging when $H$ is a bipartite graph. Complete bipartite graphs and even cycles are two important objects when we study such degenerate Tur\'{a}n problems. For complete bipartite graphs, the result of K\H{o}v\'{a}ri, S\'{o}s and Tur\'{a}n \cite{Kovari1954} showed that $\textup{ex}(n,K_{s,t})=O(n^{2-\frac{1}{s}})$ for any integers $t>s$. When $s=2,3,$ Erd\H{o}s, R\'{e}nyi and S\'{o}s \cite{Erdos1966} and Brown \cite{Brown1966} gave the matched lower bounds respectively. For general values of $s$ and $t$, Koll\'{a}r, R\'{o}nyai and Szab\'{o} \cite{KRS96} indicated that $\text{ex}(n,K_{s,t})=\Omega(n^{2-\frac{1}{s}})$ when $t\geqslant s!+1$ via norm graphs, and then this condition was improved to $t\geqslant (s-1)!+1$ by Alon, R\'{o}nyai and Szab\'{o} \cite{ARS99}. Recently, Blagojevi\'{c}, Bukh and Karasev \cite{TopologicalKST2013} gave a new type of $K_{s,t}$-free graph via topological obstructions and algebraic constructions. Then Bukh \cite{Bukh2015} established an elegant method, which is named random algebraic construction, to show that $\textup{ex}(n,K_{s,t})=\Omega(n^{2-\frac{1}{s}})$ when $t$ is sufficiently larger than $s$. From then on, the random algebraic method was applied to several Tur\'{a}n type problems, see \cite{Bukh2018,BukhTailtheta2018,Conlon2014,Ma2018,XuZhangGe2019}.

For even cycles, the extremal results of $\textup{ex}(n,C_{2\ell})$ were first studied by Erd\H{o}s \cite{1938erdos}, and then Bondy and Simonovits \cite{BondyEvenCycle1974} gave a general upper bound $\textup{ex}(n,C_{2\ell})\leqslant 100\ell n^{1+\frac{1}{\ell}}.$ Recently, Bukh and Jiang \cite{ZilinCPC2017} improved the upper bound to $\textup{ex}(n,C_{2\ell})\leqslant 80\sqrt{\ell}\log{\ell} n^{1+\frac{1}{\ell}}$, and this upper bound is the current record. However, the order of magnitude for $\textup{ex}(n,C_{2\ell})$ is unknown for any $\ell\notin\{2,3,5\},$ see \cite{Brown1966,Erdos1966,WengerC4C6C10}. For general $\ell\notin\{2,3,5,7\}$, the best known lower bounds for $\textup{ex}(n,C_{2\ell})$ were obtained by Lazebnik, Ustimenko and Woldar \cite{1995LowerBoundC2l}, and the best known lower bound for $\textup{ex}(n,C_{14})$ was shown in \cite{2012EX14}.

Due to the similarity of theta graphs and even cycles, the Tur\'{a}n number of theta graphs have been studied recently. Let theta graph $\Theta_{\ell,t}$ be a graph made of $t$ internally disjoint paths of length $\ell$ connecting two endpoints. Since it is unclear whether $\textup{ex}(n,C_{2\ell})=\Omega(n^{1+\frac{1}{\ell}})$ holds in general, the study of $\textup{ex}(n,\Theta_{\ell,t})$ is of interest. Faudree and Simonovits \cite{Faudree1983} first showed the general upper bound $\textup{ex}(n,\Theta_{\ell,t})=O_{\ell,t}(n^{1+\frac{1}{\ell}})$. Recently, Conlon \cite{Conlon2014} showed the matched lower bounds when $t$ is a sufficiently large constant. After that Bukh and Tait \cite{BukhTailtheta2018} studied the behavior of $\textup{ex}(n,\Theta_{\ell,t})$ when $\ell$ is fixed and $t$ is very large, and they further determined the dependence on $t$ when $\ell$ is odd. When $\ell$ and $t$ are relatively small, Verstra\"{e}te and Williford \cite{2019theta43} showed that $\textup{ex}(n,\Theta_{4,3})\geqslant (\frac{1}{2}-o(1))n^{\frac{5}{4}},$ and this result is perhaps the evidence that the Tur\'{a}n number of the octagon is also of order $n^{\frac{5}{4}}.$

On the contrary to the simple graph cases, there are only a few results on hypergraph Tur\'{a}n problems. There are two ways of generalizing complete bipartite graphs to hypergraphs. The first one is complete $r$-partite $r$-uniform hypergraph $K_{s_{1},s_{2},\ldots,s_{r}}^{(r)}.$ Mubayi \cite{Mubayi2002} conjectured that $\text{ex}_{r}(n,K_{s_{1},s_{2},\ldots,s_{r}}^{(r)})=\Theta(n^{r-\frac{1}{s_{1}s_{2}\cdots s_{r-1}}})$, where $s_{1}\leqslant s_{2}\leqslant\dots\leqslant s_{r},$ and he proved this conjecture in certain situations. Recently, Ma, Yuan and Zhang \cite{Ma2018} showed that if $s_{r}$ is sufficiently larger than $s_{1},s_{2},\ldots,s_{r-1},$ then this conjecture is true. The other object is complete bipartite $r$-uniform hypergraph $K_{s,t}^{(r)}$. In \cite{Mubayi2004}, Mubayi and Verstra\"{e}te showed some general bounds for $\textup{ex}_{r}(n,K_{s,t}^{(r)})$ when $s<t$. More recently, Xu, Zhang and Ge \cite{XuZhangGe2019} gave the lower bound $\textup{ex}_{r}(n,K_{s,t}^{(r)})=\Omega(n^{r-\frac{1}{t}})$ when $s$ is sufficiently larger than $t$, and the general upper bound $\textup{ex}_{r}(n,K_{s,t}^{(r)})=O(s^{\frac{1}{t}}n^{r-\frac{1}{t}})$ when $s$ is large.

The Tur\'{a}n problem for cycles in hypergraphs has been investigated for so-called Berge cycles. Gy\H{o}ri \cite{Gyori2006CPC} first determined $\textup{ex}_{3}(n,C_{3}^{B})$ for all $n$, then Bollob\'{a}s and Gy\H{o}ri \cite{Bollobas2008DM} showed that $\textup{ex}_{3}(n,C_{5}^{B})=O(n^{\frac{3}{2}}).$ Gy\H{o}ri and Lemons \cite{Lemons2012CPC} showed the general upper bounds $\textup{ex}_{r}(n,C_{2\ell}^{B})=O(n^{1+\frac{1}{\ell}})$ and $\textup{ex}_{r}(n,C_{2\ell+1}^{B})=O(n^{1+\frac{1}{\ell}})$ for all $\ell\geqslant 2$ and $r\geqslant 3.$ It is known in \cite{2017ENDM} that $\textup{ex}_{r}(n,C_{4}^{B})=\Theta(n^{\frac{3}{2}})$ when $2\leqslant r\leqslant 6$, but the order of magnitude is still unknown for $r\geqslant 7$. It is widely open whether the general upper bounds are tight for all $r,\ell\geqslant 3$. For more extremal results of Berge cycles, we refer the readers to \cite{GerbnerSIAM2017,JiangJCTB2018,Jacques2016} and the references therein.

Since there are few exact asymptotic results of $\textup{ex}_{r}(n,C_{2\ell}^{B})$, we are interested in the generalization of theta graphs to hypergraphs. Let $r$-uniform Berge theta hypergraph $\Theta_{\ell,t}^{B}$ be a set of distinct vertices $x,y,v_{1}^{1},\cdots,v_{\ell-1}^{1},\cdots,v_{1}^{t},$ $\cdots,v_{\ell-1}^{t}$ and a set of distinct edges $e_{1}^{1},\cdots,$ $e_{\ell}^{1},\cdots,e_{1}^{t},\cdots,e_{\ell}^{t}$ such that $\{x,v_{1}^{i}\}\subset e_{1}^{i},$ $\{v_{j-1}^{i},v_{j}^{i}\}\subset e_{j}^{i}$ and $\{v_{\ell-1}^{i},y\}\subset e_{\ell}^{i}$ for $1\leqslant i\leqslant t$ and $2\leqslant j\leqslant \ell-1.$ Recently, He and Tait \cite{TaitBergetheta2018} studied the Tur\'{a}n number $\textup{ex}_{r}(n,\Theta_{\ell,t}^{B}),$ in particular, they showed that for fixed $\ell$ and $r$, there is a large constant $t$ such that $\textup{ex}_{r}(n,\Theta_{\ell,t}^{B})$ can be determined in order of magnitude.

 As far as we know, the random algebraic method always requires one of some parameters to be very large, therefore determining the dependence on this large parameter is interesting. Inspired by Bukh-Tait's results on theta graph \cite{BukhTailtheta2018}, we investigate three important objects including complete $r$-partite $r$-uniform hypergraphs, complete bipartite $r$-uniform hypergraphs and Berge theta hypergraphs. Our main idea is to construct the random multi-hypergraphs via a variant of random algebraic method, and our main contributions in this paper are listed as follows.

  \begin{itemize}
    \item \textbf{Complete $r$-partite $r$-uniform hypergraphs:}
    \begin{theorem}\label{Thm:Degenerate}
   For any positive integers $s_{1},s_{2},\ldots,s_{r-1}$ and $r\geqslant 2,$ when $s_{r}$ is sufficiently large, we have
    \begin{equation*}
      \textup{ex}_{r}(n,K_{s_{1},s_{2},\ldots,s_{r}}^{(r)})=\Omega(s_{r}^{\frac{1}{s_{1}s_{2}\ldots s_{r-1}}}n^{r-\frac{1}{s_{1}s_{2}\ldots s_{r-1}}}).
    \end{equation*}
\end{theorem}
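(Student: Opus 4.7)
The plan is to execute a variant of Bukh's random algebraic construction that produces a random \emph{multi}-hypergraph $H$, and then to pass to its underlying simple hypergraph by alteration. Set $M := s_1 s_2 \cdots s_{r-1}$. Let $q$ be a prime power with $q \asymp n^{1/M}$, and take each of the $r$ parts of the vertex set to be a copy of $V := \mathbb{F}_q^M$, so that each part has $\Theta(n)$ vertices. Fix a large enough integer $D$ and set $k := \lfloor s_r^{1/M}/C \rfloor$ for a constant $C > D$. Independently sample $k$ uniform random polynomials $f_1, \ldots, f_k \in \mathbb{F}_q[x_1, \ldots, x_r]$ of total degree at most $D$ (with each $x_\ell$ interpreted as an $M$-tuple of variables), and assign each $\vec v \in V^r$ the multiplicity $\mu(\vec v) := |\{i : f_i(\vec v) = 0\}|$.

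By linearity, $\mathbb{E}\bigl[\sum_{\vec v} \mu(\vec v)\bigr] = k n^r/q = \Theta(s_r^{1/M} n^{r - 1/M})$. Since $k \ll q$, the expected number of $\vec v$ with $\mu(\vec v) \geq 2$ is a lower order term, so the underlying simple hypergraph $H^*$ (with edge set $\{\vec v : \mu(\vec v) \geq 1\}$) has the same asymptotic number of edges.

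For the forbidden configuration, fix an ordered $(s_1, \ldots, s_{r-1})$-tuple of vertices and enumerate the $M$ product tuples of its grid as $T_1, \ldots, T_M$. Its common neighborhood in $V_r$ equals
\[
\bigcap_{a=1}^{M} \bigcup_{i=1}^{k} \{x \in V_r : f_i(T_a, x) = 0\} \;=\; \bigcup_{\mathbf{i} \in [k]^M} \bigcap_{a=1}^{M} V\bigl(f_{i_a}(T_a, \cdot)\bigr).
\]
Each inner zero locus is generically the intersection of $M$ polynomials in the $M$-dimensional $V_r$, hence $0$-dimensional with at most $D^M$ points by B\'ezout, so the total common neighborhood has size at most $k^M D^M = s_r (D/C)^M < s_r$. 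The configuration can extend to a copy of $K^{(r)}_{s_1, \ldots, s_r}$ only when some inner intersection is positive-dimensional (the degenerate case). A Bukh-type random-algebraic degeneracy lemma bounds the probability of this event for each fixed pair $(\text{tuple},\mathbf{i})$; summing over all tuples and $\mathbf{i}$ and then deleting one edge from each remaining copy of $K^{(r)}_{s_1, \ldots, s_r}$ leaves $\Omega(s_r^{1/M} n^{r-1/M})$ edges with positive probability.

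The principal obstacle lies in the degeneracy step: the union over index vectors contributes an extra factor of $k^M = \Theta(s_r)$, and the union over $u$-tuples contributes a further factor of $n^{s_1 + \cdots + s_{r-1}}$, so a naive $O(1/q)$ per-instance degeneracy bound is too weak. The multi-hypergraph formulation is precisely what allows us to offset this: the $k$ independent polynomials simultaneously deliver the $s_r^{1/M}$ boost in edge count and supply extra randomness that, after a careful refinement of the degeneracy lemma, drives the expected number of degenerate contributions strictly below the edge count. Balancing the parameters $D$, $C$, and $q$ against the assumed lower bound on $s_r$ (depending only on $s_1, \ldots, s_{r-1}$) closes the argument.
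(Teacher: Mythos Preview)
Your high-level strategy is the paper's: take the union of $k\asymp s_r^{1/M}$ independent Bukh-type random algebraic hypergraphs on $\mathbb{F}_q^{M}$, observe that the common neighbourhood in part $r$ of any fixed $(s_1,\dots,s_{r-1})$-grid decomposes as a union over type vectors $\mathbf i\in[k]^{M}$ of varieties $W_{\mathbf i}$, and then alter. But you have a genuine gap exactly where you flag the ``principal obstacle'': you never actually produce a per-type bound strong enough to absorb the $n^{s_1+\cdots+s_{r-1}}$ factor coming from the sum over grids. Invoking an unspecified ``Bukh-type degeneracy lemma'' together with a ``careful refinement'' is not a proof, and your B\'ezout remark only handles the generic (zero-dimensional) case.

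The paper does not refine any degeneracy lemma, and it does not use ``extra randomness'' from the $k$ polynomials. For each fixed type $\mathbf i$ the analysis is exactly the single-polynomial one: (i) $W_{\mathbf i}$ is a variety of bounded complexity, so the dichotomy lemma gives either $|W_{\mathbf i}|\le c$ or $|W_{\mathbf i}|\ge q/2$; (ii) a direct moment computation yields $\mathbb{E}\bigl[|W_{\mathbf i}|^{s}\bigr]=O(1)$ for $s=M\bigl(\sum_i s_i-1\bigr)+2$, because any ordered $s$-tuple of common neighbours spans some $K^{(r)}_{s_1,\dots,s_{r-1},j}$ with $j\le s$, and for such a configuration the vertex count and the edge count balance exactly against the powers of $N$ and $q$; (iii) Markov then gives $\mathbb{P}[|W_{\mathbf i}|>c]=O(q^{-s})$. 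Summing over all $O(q^{M\sum_i s_i})$ grids and all $k^{M}$ types yields an expected $O(k^{M}N^{1-2/M})$ bad grids, and deleting one vertex from each costs $O(k^{M}N^{r-2/M})$ edges, which is lower order because $k^{M}=\Theta(s_r)$ is a constant in $n$. So your intuition that the multi-hypergraph ``supplies extra randomness'' to offset the $k^{M}$ cost is backwards: the union over types only \emph{adds} the factor $k^{M}$ to the bad count, and nothing in the per-type analysis improves; what closes the argument is that $k^{M}$ is independent of $n$ while step~(iii) supplies a saving that is polynomial in $q$.
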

By the result of \cite[Lemma 3.1]{Ma2018}, the dependence on large $s_{r}$ is tight.

    \item \textbf{Complete bipartite $r$-uniform hypergraphs:}

   In $2004$, Mubayi and Verstra\"{e}te \cite{Mubayi2004} considered a hypergraph extension of the complete bipartite graph. In this paper, we call it complete bipartite $r$-uniform hypergraph for simplicity. Recall the definition of complete bipartite $r$-uniform hypergraph as follows.
\begin{definition}[Complete bipartite $r$-uniform hypergraph]
  Let $X_{1},X_{2},\ldots,X_{t}$ be $t$ pairwise disjoint sets of size $r-1$, and let $Y$ be a set of $s$ elements, disjoint from $\bigcup\limits_{i\in [t]}X_{i}$. Then $K_{s,t}^{(r)}$ denotes the complete bipartite $r$-uniform hypergraph with vertex set $(\bigcup\limits_{i\in [t]}X_{i})\cup Y$ and edge set $\{X_{i}\cup\{y\}:i\in [t],y\in Y\}$.
\end{definition}
Using our tools, we obtain the following result.
    \begin{theorem}\label{Thm:bipartite}
 For any positive integers $t$ and $r\geqslant 2,$ when $s$ is sufficiently large, we have
    \begin{equation*}
      \textup{ex}_{r}(n,K_{s,t}^{(r)})=\Omega(s^{\frac{1}{t}}n^{r-\frac{1}{t}}).
    \end{equation*}
\end{theorem}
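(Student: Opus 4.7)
The plan is to refine the random algebraic construction of~\cite{XuZhangGe2019}, which already yields $\Omega(n^{r-1/t})$, by sampling several independent random polynomials at once; this creates a random \emph{multi}-hypergraph whose underlying simple hypergraph carries the extra $s^{1/t}$ factor. Choose a prime power $q$ with $q^t = \Theta(n)$ and set $V = \mathbb{F}_q^t$. Fix a small absolute constant $c \in (0,1)$ and a degree bound $D = D(r,t)$, put $m = \lfloor c\, s^{1/t} \rfloor$, and sample $f_1, \ldots, f_m \colon V^r \to \mathbb{F}_q$ independently and uniformly from the space of symmetric polynomials of degree at most $D$. Form the random multi-hypergraph $H^{\ast}$ in which each $r$-subset $e$ has multiplicity $|\{i : f_i(e) = 0\}|$, and let $H$ be its underlying simple hypergraph.

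Each $r$-subset belongs to $H$ with probability $1 - (1 - 1/q)^m \approx m/q$, so a first-moment computation yields $\mathbb{E}|E(H)| = \Theta(s^{1/t} n^{r - 1/t})$, and a standard second-moment bound supplies concentration. To rule out $K_{s,t}^{(r)}$, fix disjoint $(r-1)$-sets $X_1, \ldots, X_t$; their common neighbourhood in $H$ is $N = \bigcap_{j=1}^t V_j$ with $V_j = \{y \in V : \prod_{i=1}^m f_i(X_j \cup \{y\}) = 0\}$, the zero set in $V$ of a polynomial in $y$ of degree at most $mD$. Viewing each $V_j$ as a ``random variety'' of degree $mD$, the random algebraic framework of Bukh~\cite{Bukh2015} gives $\mathbb{E}|N| \approx m^t = c^t s$, which is strictly less than $s$ for $c < 1$, and extends this to the uniform pointwise estimate $|N| < s$ outside an algebraic ``exceptional locus'' of bounded complexity.

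The main obstacle is to upgrade the expectation bound on $|N|$ to a bound holding uniformly over all choices of $(X_1, \ldots, X_t)$, since $V_1, \ldots, V_t$ are correlated through the shared polynomials $f_1, \ldots, f_m$. Here the multi-hypergraph perspective is essential: by indexing edges of $H^{\ast}$ by pairs $(e, i)$ with $f_i(e) = 0$, one stratifies the $t$-tuples by the joint algebraic type of $(V_1, \ldots, V_t)$ and runs Schwartz--Zippel-type estimates stratum by stratum, showing that $r$-sets participating in a ``bad'' tuple contribute only $o(s^{1/t} n^{r - 1/t})$ edges; deleting them produces a genuine $K_{s,t}^{(r)}$-free hypergraph of the required size. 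The remaining routine pieces---fixing $D$, the variance computation, and bounding the degenerate strata---mirror the arguments of~\cite{Bukh2015, XuZhangGe2019}.
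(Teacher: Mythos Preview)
Your overall strategy coincides with the paper's: take the union of $m=\Theta(s^{1/t})$ independent random hypergraphs, each defined by a single bounded-degree random symmetric polynomial on $\mathbb{F}_q^t$, and then clean up the resulting multi-hypergraph. The decisive point, however, is how one controls the common neighbourhood $N$ of a fixed tuple $(X_1,\ldots,X_t)$, and here your proposal has a genuine gap.

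In your second paragraph you describe each $V_j$ as the zero set of the product $\prod_{i=1}^m f_i(X_j\cup\{y\})$, a polynomial in $y$ of degree $mD$. But then $N=\bigcap_j V_j$ is a variety whose complexity grows with $m$, and the Bukh/Lang--Weil dichotomy (Lemma~\ref{Lem:variety}) only yields ``$|N|\le c_M$ or $|N|\ge q/2$'' with $c_M$ depending on $m$, hence on $s$. That is fatal: one needs the dichotomy threshold to be a constant $p=p(r,t)$ independent of $s$, so that the choice $m\approx (s/p)^{1/t}$ forces $|N|<s$ and makes the cleaned hypergraph $K_{s,t}^{(r)}$-free.

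The paper avoids this by never forming the product. For each \emph{type} $J=(j_1,\ldots,j_t)\in[m]^t$ it looks at
\[
W_J=\{\,y: f_{j_k}(X_k\cup\{y\})=0\ \text{for all }k\,\},
\]
which is cut out by $t$ polynomials each of degree at most $d=d(r,t)$, so $W_J$ has complexity bounded independently of $m$. Lemma~\ref{Lem:variety} then gives $|W_J|\le p$ or $|W_J|\ge q/2$ with $p=p(r,t)$; a bounded-moment computation together with Markov's inequality bounds $\mathbb{P}[|W_J|\ge q/2]$, and summing over the $m^t$ types yields Lemma~\ref{Lem:BipartiteBadSet}. Since $N\subseteq\bigcup_J W_J$, this caps $|N|$ at $pm^t$ outside a set of bad sequences whose expected size is negligible. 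Your third paragraph gestures at exactly this stratification, but the reason for stratifying---keeping the variety complexity free of $m$---is not identified, and ``Schwartz--Zippel-type estimates'' are not the right tool here. Once you make this adjustment the rest of your outline goes through; no second-moment concentration for $|E(H)|$ is needed, as linearity of expectation already suffices.
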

By the result of \cite[Theorem 1.3]{XuZhangGe2019}, the dependence on large $s$ is tight.

As a corollary, both of Theorems \ref{Thm:Degenerate} and \ref{Thm:bipartite} imply that the upper bound of K\H{o}v\'{a}ri, S\'{o}s and Tur\'{a}n \cite{Kovari1954} is tight for all sufficiently large $t$.

\begin{corollary}
  For given positive integer $s,$ when $t$ is sufficiently large, we have
  \begin{equation*}
    \textup{ex}(n,K_{s,t})=\Theta(t^{\frac{1}{s}}n^{2-\frac{1}{s}}).
  \end{equation*}
\end{corollary}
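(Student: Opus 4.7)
The plan is to deduce the corollary as an immediate combination of Theorem~\ref{Thm:Degenerate} with the classical K\H{o}v\'{a}ri--S\'{o}s--Tur\'{a}n upper bound; essentially all of the heavy lifting is already contained in Theorem~\ref{Thm:Degenerate}, and the corollary is little more than a specialisation that records the matching bounds.

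First I would observe that in the case $r=2$, the complete $r$-partite $r$-uniform hypergraph $K^{(2)}_{s_1,s_2}$ is exactly the ordinary complete bipartite graph $K_{s_1,s_2}$. Applying Theorem~\ref{Thm:Degenerate} with $r=2$, $s_1=s$, and $s_2=t$, and using the hypothesis that $t$ may be taken arbitrarily large in terms of the fixed integer $s$, yields the lower bound
\[
\textup{ex}(n,K_{s,t}) = \Omega\bigl(t^{\frac{1}{s}}\, n^{2-\frac{1}{s}}\bigr).
\]

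For the matching upper bound, I would invoke the theorem of K\H{o}v\'{a}ri, S\'{o}s and Tur\'{a}n~\cite{Kovari1954}, which in its standard quantitative form already gives $\textup{ex}(n,K_{s,t}) = O\bigl(t^{\frac{1}{s}}\, n^{2-\frac{1}{s}}\bigr)$ with precisely the $t^{1/s}$ dependence on the large parameter $t$. Combining the two bounds immediately produces $\textup{ex}(n,K_{s,t}) = \Theta\bigl(t^{\frac{1}{s}}\, n^{2-\frac{1}{s}}\bigr)$.

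The only subtlety in writing this up is bookkeeping: the ``$s_r$ sufficiently large'' hypothesis of Theorem~\ref{Thm:Degenerate} has to be read, after the substitution, as ``$t$ sufficiently large depending on the fixed value $s$,'' which is automatic. There is no genuine obstacle here, since the random multi-hypergraph construction underlying Theorem~\ref{Thm:Degenerate} does all the work. Alternatively, one could appeal to Theorem~\ref{Thm:bipartite} with $r=2$ and the roles of $s$ and $t$ interchanged, since the complete bipartite $r$-uniform hypergraph $K^{(2)}_{s,t}$ also reduces to $K_{s,t}$ when $r=2$; either route confirms that the K\H{o}v\'{a}ri--S\'{o}s--Tur\'{a}n exponent and its leading large-parameter behaviour are simultaneously sharp.
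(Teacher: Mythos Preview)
Your proposal is correct and matches the paper's own treatment: the corollary is stated immediately after Theorems~\ref{Thm:Degenerate} and~\ref{Thm:bipartite} with the remark that either of them (specialised to $r=2$) supplies the lower bound, while the K\H{o}v\'{a}ri--S\'{o}s--Tur\'{a}n theorem gives the matching upper bound. Your observation that Theorem~\ref{Thm:bipartite} requires swapping the roles of $s$ and $t$ is also right, and there is nothing further to add.
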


    \item \textbf{Berge theta hypergraphs:}

     For general $\ell,$ by the upper bound for $\textup{ex}(n,\Theta_{\ell,t})=O_{\ell}(t^{1-\frac{1}{\ell}}n^{1+\frac{1}{\ell}})$ in \cite{BukhTailtheta2018}, Gerbner, Methuku and Palmer~\cite{UpperBoundLemma2018} showed the following upper bound when $t$ is sufficiently large.
  \begin{theorem}[\cite{UpperBoundLemma2018}]\label{Thm:Bergeupperbound}
    For fixed $\ell\geqslant 2,$ when $t$ is sufficiently large, we have
    \begin{equation*}
      \textup{ex}_{r}(n,\Theta_{\ell,t}^{B})=O_{\ell,r}(t^{r-1-\frac{1}{\ell}}n^{1+\frac{1}{\ell}}).
    \end{equation*}
  \end{theorem}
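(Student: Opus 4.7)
The plan is to combine the Bukh and Tait graph bound $\textup{ex}(n, \Theta_{\ell, t}) = O_\ell(t^{1-1/\ell} n^{1+1/\ell})$ with the general shadow reduction of Gerbner, Methuku and Palmer~\cite{UpperBoundLemma2018} that translates Berge-$F$ Tur\'{a}n problems for $r$-uniform hypergraphs into ordinary $F$-Tur\'{a}n problems for simple graphs. Concretely, I plan to (i) build a simple shadow graph $G$ on the vertex set of $H$ that contains no copy of $\Theta_{\ell, t}$ and satisfies $|E(H)| \leq C_r \cdot v(\Theta_{\ell, t})^{r-2} \cdot e(G)$, and (ii) feed $G$ into the Bukh and Tait bound.

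For step (i), given a Berge-$\Theta_{\ell, t}^B$-free $r$-uniform hypergraph $H$ on $n$ vertices, I would assign to each hyperedge $e \in E(H)$ a pair $\phi(e) \subseteq e$ by a Hall/greedy procedure that limits the multiplicity of each pair. The key lifting observation is that any copy of $\Theta_{\ell, t}$ in the resulting simple graph $G = \{\phi(e) : e \in E(H)\}$ would, via a choice of pairwise distinct hyperedges from the preimages $\phi^{-1}(p)$ (which are pairwise disjoint and each nonempty), produce a Berge-$\Theta_{\ell, t}^B$ in $H$; hence $G$ is $\Theta_{\ell, t}$-free. For step (ii), the Bukh and Tait bound yields $e(G) = O_\ell(t^{1-1/\ell} n^{1+1/\ell})$, and since $v(\Theta_{\ell, t}) = 2 + t(\ell - 1) = \Theta_\ell(t)$, combining the two steps gives
\[
|E(H)| \;\leq\; C_r \cdot v(\Theta_{\ell, t})^{r-2} \cdot e(G) \;=\; O_{\ell, r}(t^{r - 2}) \cdot O_\ell(t^{1 - 1/\ell} n^{1+1/\ell}) \;=\; O_{\ell, r}(t^{r - 1 - 1/\ell} n^{1+1/\ell}),
\]
which is the desired bound.

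The main obstacle is step (i), namely the shadow construction with the quantitative multiplicity control. A naive random or greedy pair-assignment produces a graph $G$ whose pair-multiplicity is not bounded by $v(\Theta_{\ell, t})^{r-2}$ when the hypergraph contains highly concentrated pairs. The Gerbner--Methuku--Palmer lemma circumvents this by an iterative ``vertex-link'' reduction: peel off one uniformity level at a time, $r - 2$ times in total, and at each step argue that, up to a multiplicative loss of $O(v(F))$, one can route the surviving hyperedges through a common vertex and absorb them into a lower-uniformity hypergraph while preserving the Berge-$F$-free property. Iterating this peel cleanly through $r - 2$ stages, and checking at every intermediate uniformity that the induced sub-hypergraph remains Berge-$F$-free so that the lifting lemma applies at the end, is the delicate technical point and would be the main step to carry out from scratch.
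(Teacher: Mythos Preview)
Your proposal is correct and matches the paper's treatment: the paper does not give its own proof of this theorem but cites it from Gerbner, Methuku and Palmer~\cite{UpperBoundLemma2018}, noting explicitly that it follows by feeding the Bukh--Tait bound $\textup{ex}(n,\Theta_{\ell,t})=O_{\ell}(t^{1-1/\ell}n^{1+1/\ell})$ into their Berge-to-graph reduction. Your outline of that reduction (a shadow/assignment argument losing a factor of $v(\Theta_{\ell,t})^{r-2}=\Theta_{\ell}(t^{r-2})$) and the subsequent arithmetic are exactly the intended route.
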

    However we do not know whether the general upper bound is tight. Using our tools, we can show a lower bound as follows.

    \begin{theorem}\label{Thm:Bergelowerbound}
    Let $\ell\geqslant 2$ be a fixed integer, when $t$ is sufficiently large, we have
    \begin{equation*}
      \textup{ex}_{r}(n,\Theta_{\ell,t}^{B})=\Omega_{\ell,r}(t^{\frac{1}{\ell}}n^{1+\frac{1}{\ell}}).
    \end{equation*}
  \end{theorem}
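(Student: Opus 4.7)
The plan is to extend Bukh--Tait's random algebraic construction for $\Theta_{\ell,t}$ in graphs to the Berge $r$-uniform setting, using the paper's multi-hypergraph framework. Take a prime power $q$ and set $V = \mathbb{F}_q^{\ell}$, so $|V| = q^\ell \sim n$. Let $F = (f_1, \ldots, f_d) \colon V^r \to \mathbb{F}_q^d$ be a vector of bounded-degree polynomials whose coefficients are drawn uniformly and independently from $\mathbb{F}_q$; declare an ordered $r$-tuple $(v_1, \ldots, v_r) \in V^r$ to be an edge of the random $r$-uniform multi-hypergraph $\tilde H$ exactly when $F(v_1, \ldots, v_r) = 0$. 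I would tune the codimension $d = \ell(r-1) - 1 - k$, where $k$ is chosen so that $q^{\ell k} \asymp t$, so that $\mathbb{E}[|E(\tilde H)|] \asymp q^{\ell+1+k} = q^k \cdot n^{1+1/\ell} \asymp t^{1/\ell} n^{1+1/\ell}$.

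The core technical step is to show that for every fixed pair $x, y \in V$, the number of Berge paths of length $\ell$ from $x$ to $y$ in $\tilde H$ is bounded by $C q^{\ell k}$ for a constant $C = C(\ell, r, \deg F)$, with positive probability over $F$. A Berge path is specified by internal vertices $v_1, \ldots, v_{\ell-1} \in V$ together with $r-2$ ``dummy'' vertices per edge, for a total of $\ell(\ell-1) + \ell^2(r-2) = \ell[\ell(r-1) - 1]$ scalar variables, constrained by the $\ell d$ polynomial equations from $F$ vanishing on each of the $\ell$ edges. For generic $F$ this cuts out an algebraic variety of dimension $\ell[\ell(r-1) - 1 - d] = \ell k$, so by Lang--Weil the count of $\mathbb{F}_q$-points is at most $C q^{\ell k}$; choosing $t$ sufficiently large in $\ell$ and $r$ makes $C q^{\ell k} \leq t - 1$, and therefore $\tilde H$ contains no Berge copy of $\Theta_{\ell,t}^B$.

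A standard probabilistic argument then shows that with positive probability the edge count is at least half of its expectation and the Lang--Weil estimate holds uniformly across all pairs $(x,y)$, possibly after deleting a negligible set of ``bad'' vertices as in Bukh's original argument. Finally, a Bezout-type bound limits the multiplicity of any unordered $r$-set in $\tilde H$ to a constant depending only on $\ell, r$ and $\deg F$, so collapsing $\tilde H$ to a simple $r$-uniform hypergraph costs only a constant factor and yields the claimed $\Omega_{\ell,r}(t^{1/\ell} n^{1+1/\ell})$ lower bound.

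The main obstacle lies in the second step: establishing the Lang--Weil dimension count for a uniformly random $F$, not merely a Zariski-generic one. The $r-2$ dummy slots per edge introduce a large block of variables absent from the graph case, and one must verify that the random coefficients of $F$ produce a full-rank (in the generic-dimension sense) system of constraints on these variables with overwhelming probability, uniformly over all pairs $(x,y)$. This calls for a probabilistic genericity lemma in the spirit of Bukh's original work, suitably adapted to handle the flexibility introduced by the Berge incidence structure, and it is precisely this step that forces the hypothesis that $t$ be sufficiently large in terms of $\ell$ and $r$.
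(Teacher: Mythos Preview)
Your proposal diverges from the paper's method in a way that creates a real obstacle, and despite invoking ``the paper's multi-hypergraph framework'' you do not actually use it. The paper does \emph{not} reduce the codimension of the defining system; it keeps the number of polynomials at exactly $\ell(r-1)-1$, so that for each fixed pair $(x,y)$ and each fixed ``type'' $K\in[h]^{\ell_0}$ (an assignment of each edge of the path to one of $h$ independent random hypergraphs $\mathcal{F}_1,\ldots,\mathcal{F}_h$), the set $S_K$ of Berge paths of that type is cut out by a system that is generically zero-dimensional. This is what makes $\mathbb{E}[|S_K|^{r\ell}]=O(1)$ hold and lets them feed $S_K$ into the dichotomy Lemma~\ref{Lem:variety} (either $|S_K|\leqslant c$ or $|S_K|\geqslant q/2$) followed by Markov's inequality. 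The factor $t^{1/\ell}$ comes entirely from setting $h=(t/p)^{1/\ell}$ and summing over the at most $\ell h^{\ell}$ types; it is not obtained by loosening the algebraic constraints.

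Your plan instead takes a single random system of codimension $d=\ell(r-1)-1-k$ with $q^{\ell k}\asymp t$, hoping Lang--Weil gives $|S|\leqslant Cq^{\ell k}$ for the path variety. But once the expected variety is positive-dimensional, the moment bound $\mathbb{E}[|S|^m]=O(1)$ fails outright, and the ``bounded or $\geqslant q/2$'' dichotomy no longer separates good from bad. You would need a new probabilistic genericity lemma showing that a uniformly random $F$ cuts out a variety of the \emph{correct} dimension $\ell k$ for \emph{every} pair $(x,y)$ simultaneously; you offer no mechanism for this, and even for a single pair there is no reason the probability of a dimension jump is $o(q^{-2\ell})$, which a union bound over pairs would require. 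The multi-hypergraph trick exists precisely to sidestep this difficulty by keeping every individual variety zero-dimensional, so what you flag as ``the main obstacle'' is in fact the entire content of the paper's argument, and your sketch does not supply a substitute for it.
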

  \end{itemize}

  The rest of this paper is organized as follows.~In Section \ref{Sec:Preliminary}, we introduce some basic facts about random algebraic method. In Section \ref{Sec:ProofMain}, we use a variant of random algebraic method to construct various multi-hypergraphs and then prove our main results of complete $r$-partite $r$-uniform hypergraphs, complete bipartite $r$-uniform hypergraphs and Berge theta hypergraphs. Finally we conclude in Section \ref{Sec:remarks}, and provide some remarks and open problems on the main topics.

\section{Preliminaries of random algebraic method}\label{Sec:Preliminary}
 Let $t,r$ be positive integers with $r\geqslant 2,$ $q$ be a sufficiently large prime power, and $\mathbb{F}_{q}$ be the finite field of order $q$. Let $\textbf{X}^{i}=(X_{1}^{i},X_{2}^{i},\ldots,X_{t}^{i})\in \mathbb{F}_{q}^{t}$ for each $i\in [r]$. Consider polynomials $f\in \mathbb{F}_{q}[\textbf{X}^{1},\textbf{X}^{2},\ldots,\textbf{X}^{r}]$ with $rt$ variables over $\mathbb{F}_{q}$. We say such a polynomial $f$ has degree at most $d$ in $\textbf{X}^{i},$ if each of its monomials has degree at most $d$ with respect to $\textbf{X}^{i}$, that is,  $(X_{1}^{i})^{\alpha_{1}}(X_{2}^{i})^{\alpha_{2}}\cdots(X_{t}^{i})^{\alpha_{t}}$ satisfies $\sum\limits_{j=1}^{t}\alpha_{j}\leqslant d.$ Moreover, a polynomial $f$ is called symmetric if exchanging $\textbf{X}^{i}$ with $\textbf{X}^{j}$ for every $1\leqslant i\leqslant j\leqslant r$ does not affect the value of $f$. Let $\mathcal{P}_{d}\subseteq \mathbb{F}_{q}[\textbf{X}^{1},\textbf{X}^{2},\ldots,\textbf{X}^{r}]$ be the set of all symmetric polynomials of degree at most $d$ in $\textbf{X}^{i}$ for every $1\leqslant i \leqslant r.$

 We use the term $random\ polynomial$ to represent a polynomial chosen uniformly at random from $\mathcal{P}_{d}.$ Since the constant term of a random polynomial is chosen uniformly from $\mathbb{F}_{q},$ one can easily show that
 \begin{equation*}
   \mathbb{P}[f(v_{1},v_{2},\ldots,v_{r})=0]=\frac{1}{q}
 \end{equation*}
 for a random polynomial $f$ and any fixed $r$-tuple $(v_{1},v_{2},\ldots,v_{r}).$

In our constructions of random hypergraphs, the edges will appear when one polynomial or a system of polynomials vanishes, hence we can describe subhypergraphs as varieties. Let $\bar{\mathbb{F}}_{q}$ be the algebraic closure of $\mathbb{F}_{q},$ a variety over $\bar{\mathbb{F}}_{q}$ is a set of the form
 \begin{equation*}
   W=\{x\in \bar{\mathbb{F}}_{q}^{t}: f_{1}(x)=f_{2}(x)=\cdots=f_{s}(x)=0\}
 \end{equation*}
 for given polynomials $f_{1},f_{2},\ldots,f_{s}.$ That is, a variety is the set of common roots of a set of polynomials. Let $W(\mathbb{F}_{q})=W\cap \mathbb{F}_{q},$ and we say that $W$ has complexity at most $M$ if the above parameters $s,t$ and the maximum degree of the polynomials are all bounded by $M$.

 Now we introduce two important lemmas which will be useful in our constructions. The first lemma is the key insight of the random algebraic construction, which provides very non-smooth probability distributions. While the second lemma will help us calculate the probability in certain situations.

 \begin{lemma}[\cite{Bukh2018}]\label{Lem:variety}
   Suppose $W$ and $D$ are varieties over $\bar{\mathbb{F}}_{q}$ of complexity at most $M$ which are defined over $\mathbb{F}_{q}.$ Then either $|W(\mathbb{F}_{q})\setminus D(\mathbb{F}_{q})|\leqslant c_{M}$ or $|W(\mathbb{F}_{q})\setminus D(\mathbb{F}_{q})|\geqslant \frac{q}{2},$ where $c_{M}$ depends only on $M$.
 \end{lemma}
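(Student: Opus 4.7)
The plan is to reduce the dichotomy to an application of the Lang--Weil estimate on absolutely irreducible components of $W$. First I would decompose $W$ over $\bar{\mathbb{F}}_q$ into absolutely irreducible components $W=\bigcup_i W_i$. A standard result in effective algebraic geometry (e.g.\ via resultants or B\'ezout-type bounds) guarantees that the number of components and the degree and ambient dimension of each $W_i$ are bounded by some function of the complexity parameter $M$ only. Thus the entire decomposition has ``complexity'' bounded by some $M'=M'(M)$.

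Next I would split the components into two types. Call $W_i$ \emph{small} if either (a) $W_i\subseteq D$, or (b) $\dim W_i=0$, or (c) $W_i$ is not defined over $\mathbb{F}_q$, i.e.\ the Frobenius $\mathrm{Frob}_q$ sends $W_i$ to a different component $W_j$. In case (b) the component contributes at most $\deg W_i$ points. In case (c), every $\mathbb{F}_q$-point of $W_i$ is fixed by Frobenius and so must lie in $W_i\cap \mathrm{Frob}_q(W_i)=W_i\cap W_j$, a variety of strictly smaller dimension and bounded complexity, so by induction on dimension it contributes boundedly many points. Call $W_i$ \emph{large} otherwise: then $W_i$ is absolutely irreducible, defined over $\mathbb{F}_q$, of dimension $k\geqslant 1$, and not contained in $D$.

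If every component is small, then $|W(\mathbb{F}_q)\setminus D(\mathbb{F}_q)|\leqslant c_M$ for a constant $c_M$ depending only on $M$, and we are done. Otherwise pick a large component $W_i$ of dimension $k\geqslant 1$. By the Lang--Weil theorem,
\begin{equation*}
|W_i(\mathbb{F}_q)|\geqslant q^k - C_M\, q^{k-1/2},
\end{equation*}
where $C_M$ depends only on the complexity bound. Since $W_i\not\subseteq D$ and $W_i$ is irreducible, $W_i\cap D$ is a proper subvariety of $W_i$ with $\dim(W_i\cap D)\leqslant k-1$ and complexity $O_M(1)$; a crude point-count (again via Lang--Weil or Schwartz--Zippel applied to the defining ideal) yields $|(W_i\cap D)(\mathbb{F}_q)|\leqslant C_M' q^{k-1}$. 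Combining,
\begin{equation*}
|W(\mathbb{F}_q)\setminus D(\mathbb{F}_q)|\geqslant |W_i(\mathbb{F}_q)|-|(W_i\cap D)(\mathbb{F}_q)|\geqslant q^k - C_M q^{k-1/2}-C_M' q^{k-1}\geqslant \tfrac{q}{2},
\end{equation*}
provided $q$ is sufficiently large in terms of $M$ (and absorbing the small component contributions into the error).

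The main obstacle is the effective algebraic-geometry input: bounding uniformly, in terms of $M$ alone, the number and degree of the absolutely irreducible components of $W$, and checking that intersections such as $W_i\cap\mathrm{Frob}_q(W_i)$ and $W_i\cap D$ remain of complexity $O_M(1)$ so that Lang--Weil applies with a single constant $C_M$. Once that bookkeeping is in place, the dichotomy (small versus large components) and the standard Lang--Weil estimate together deliver the two-case conclusion, and the constant $c_M$ is obtained by summing the contributions from all small components plus the threshold needed so that the ``large'' estimate crosses $q/2$.
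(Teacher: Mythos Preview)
The paper does not prove this lemma at all: it is quoted verbatim from \cite{Bukh2018} and used as a black box, so there is no ``paper's own proof'' to compare against. Your outline --- decompose $W$ into absolutely irreducible components with number and degrees bounded in terms of $M$, discard components contained in $D$ or not defined over $\mathbb{F}_q$ (the latter via the Frobenius-conjugate intersection trick), and apply Lang--Weil to any remaining positive-dimensional $\mathbb{F}_q$-component --- is exactly the standard route to such a dichotomy and is essentially what underlies the cited result. One small point worth tightening: your last sentence conflates two roles of $c_M$. The clean way is to let $A=A(M)$ bound the total contribution of small components and let $q_0=q_0(M)$ be the threshold beyond which the Lang--Weil lower bound exceeds $q/2$; then set $c_M=\max(A,\,q_0^{M})$, so that for $q\leqslant q_0$ the first alternative holds trivially because $|W(\mathbb{F}_q)|\leqslant q^{M}\leqslant c_M$. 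With that adjustment your sketch is complete.
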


\begin{lemma}[\cite{Ma2018}]\label{Lem:probability}
    Given a set $U\subseteq \binom{\mathbb{F}_{q}^{t}}{r},$ let $V\subseteq \mathbb{F}_{q}^{t}$ be the set consisting of all points appeared as an element of an $r$-tuple in $U$. Suppose that $\binom{|U|}{2}<q,$ $\binom{|V|}{2}<q$ and $|U|\leqslant d.$ If $f$ is a random polynomial chosen from $\mathcal{P}_{d}$, then
  \begin{equation*}
    \mathbb{P}[f(u^{1},u^{2},\ldots,u^{r})=0,  \forall \{u^{1},u^{2},\ldots,u^{r}\}\in U]=q^{-|U|}.
  \end{equation*}
 \end{lemma}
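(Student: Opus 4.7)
The plan is to verify that the evaluation map
\[
\Phi: \mathcal{P}_d \to \mathbb{F}_q^{|U|}, \qquad f \mapsto \bigl(f(u^1, \ldots, u^r)\bigr)_{\{u^1, \ldots, u^r\} \in U},
\]
is a surjective $\mathbb{F}_q$-linear map. Once surjectivity is in hand, every fiber of $\Phi$ has the same cardinality $|\mathcal{P}_d|/q^{|U|}$; in particular, the preimage of the zero vector---the set of symmetric polynomials in $\mathcal{P}_d$ that simultaneously vanish on every $r$-tuple of $U$---has exactly this size, which yields the claimed probability $q^{-|U|}$ when $f$ is drawn uniformly from $\mathcal{P}_d$.

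By linearity, surjectivity of $\Phi$ reduces to producing, for each distinguished $S_0 = \{u_0^1, \ldots, u_0^r\} \in U$, a single polynomial $f_{S_0} \in \mathcal{P}_d$ with $f_{S_0}(u_0^1, \ldots, u_0^r) \neq 0$ and $f_{S_0}(u^1, \ldots, u^r) = 0$ for every other $\{u^1, \ldots, u^r\} \in U$. I would construct $f_{S_0}$ in two stages. First, using $\binom{|V|}{2} < q$ and a counting argument over the $q^t$ linear functionals on $\mathbb{F}_q^t$---each pair $v \neq w$ in $V$ rules out only the $q^{t-1}$ functionals with $\ell(v) = \ell(w)$---I pick $\ell : \mathbb{F}_q^t \to \mathbb{F}_q$ whose restriction to $V$ is injective. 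Then $\ell(S)$ is an $r$-element subset of $\mathbb{F}_q$ for every $S \in U$, and distinct tuples yield distinct subsets, so for each $S \neq S_0$ one may select a scalar $c_S \in \ell(S) \setminus \ell(S_0)$. Setting $P(z) = \prod_{S \in U,\, S \neq S_0}(z - c_S) \in \mathbb{F}_q[z]$, which has degree $|U|-1 \leq d-1$, and defining
\[
f_{S_0}(\mathbf{X}^1, \ldots, \mathbf{X}^r) = \prod_{i=1}^r P\bigl(\ell(\mathbf{X}^i)\bigr),
\]
one checks that $f_{S_0}$ is symmetric by construction, lies in $\mathcal{P}_d$ because each factor $P(\ell(\mathbf{X}^i))$ has degree at most $d$ in $\mathbf{X}^i$, vanishes on every $S \neq S_0$ because $c_S \in \ell(S)$ kills one factor, and is nonzero on $S_0$ because no $c_S$ belongs to $\ell(S_0)$.

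The main obstacle is engineering a single polynomial that is simultaneously symmetric, of controlled block-wise degree, and a delta function on the chosen tuple. The trick that resolves this is the linear substitution $\mathbf{X}^i \mapsto \ell(\mathbf{X}^i)$, which trades a genuinely $r$-block multivariate interpolation problem for a univariate one over $\mathbb{F}_q$: symmetry then comes for free from the product structure $\prod_i P(\ell(\mathbf{X}^i))$, and the hypothesis $|U| \leq d$ ensures that the Lagrange-type factor $P$ fits inside the allowed degree budget. The auxiliary assumption $\binom{|U|}{2} < q$ is in the same spirit as $\binom{|V|}{2} < q$ and is used to guarantee that the separating scalars $c_S$ can be chosen distinct whenever convenient, so that $P$ has the expected degree and the interpolation goes through without collisions in $\mathbb{F}_q$.
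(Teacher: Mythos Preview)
The paper does not supply its own proof of this lemma; it is quoted from \cite{Ma2018} and used as a black box throughout, so there is no in-paper argument to compare against.

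Your argument is essentially correct. The evaluation map $\Phi$ is $\mathbb{F}_q$-linear, and your separating polynomials $f_{S_0}=\prod_{i=1}^r P(\ell(\mathbf{X}^i))$ do the job: the product is manifestly symmetric, has degree $|U|-1\leqslant d$ in each block $\mathbf{X}^i$, vanishes on every $S\ne S_0$ because some $u\in S\setminus S_0$ gives $\ell(u)=c_S$ a root of $P$, and is nonzero on $S_0$ because $\ell(S_0)$ avoids all the $c_S$ by construction. Surjectivity then follows, and with it the uniform fiber size and the probability $q^{-|U|}$.

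One remark: your construction, as written, never actually uses the hypothesis $\binom{|U|}{2}<q$. Repeats among the $c_S$ are harmless---they can only lower $\deg P$---and the separation step $c_S\in\ell(S)\setminus\ell(S_0)$ relies solely on $\binom{|V|}{2}<q$ to make $\ell|_V$ injective. Your final paragraph gestures at a role for $\binom{|U|}{2}<q$ without pinning one down; it would be cleaner either to omit that discussion or to state explicitly that your variant of the argument does not require this hypothesis.
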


\section{Constructions of random multi-hypergraphs }\label{Sec:ProofMain}
In this section, we will show some lower bounds for $\textup{ex}_{r}(n,\mathcal{T})$ via constructions of random multi-hypergraphs. Here we illustrate our main idea briefly. We first construct a random multi-hypergraph by taking union of $h$ random hypergraphs. Our goal is to show that averagely this multi-hypergraph contains many edges, with very few copies of $\mathcal{T}$ and multiple edges. For different forbidden hypergraphs $\mathcal{T}$, we will define the corresponding bad structures and estimate their number. Finally we will delete one vertex from each bad structure and delete all of the multiple edges to obtain a new hypergraph, which is $\mathcal{T}$-free and has expected number of edges.

There are three major ingredients. First, the random multi-hypergraph is the union of several random hypergraphs, which are defined by some bounded-degree random polynomials. Hence Lemma~\ref{Lem:probability} can help us estimate the expectation of number of single edges and multiple edges, respectively. Second, since the independence between different random hypergraphs, Lemma~\ref{Lem:probability} still works when we need to estimate the expectation of number of some structures, though the edges of such structures in multi-hypergraph are from distinct original random hypergraphs. The third ingredient is that, since the random hypergraphs are defined by bounded-degree polynomials, we can define the bad structure in multi-hypergraph properly. Then we regard the set of bad structures as variety, Lemma~\ref{Lem:variety} can help us bound the expectation of number of the bad structures, combining the Markov's inequality.

\subsection{Complete $r$-partite $r$-uniform hypergraphs}\label{Sec:Complete degenerate hypergraph}
In this subsection we consider the Tur\'{a}n number of complete $r$-partite $r$-uniform hypergraph $K_{s_{1},s_{2},\ldots,s_{r}}^{(r)}.$ We construct the random multi-hypergraph based on the construction of \cite{Ma2018}.

\begin{definition}[\cite{Ma2018}]\label{Def:DegenerateRandomGraph}
 For given integers $s_{1},s_{2},\ldots,s_{r-1}$ and $r,$ let $b=\prod\limits_{i=1}^{r-1}s_{i},$ $t=\sum\limits_{i=1}^{r-1}s_{i},$ $s=b(t-1)+2$ and $d=bs.$ Let $N=q^{b},$ we pick a symmetric polynomial $f$ from $\mathcal{P}_{d}$ uniformly at random. Then we define an $r$-uniform hypergraph $\mathcal{G}$ on $N$ vertices as following: the vertex set is a copy of $\mathbb{F}_{q}^{b},$ and the $r$-tuple $\{v_{1},v_{2},\ldots,v_{r}\}$ forms an edge of $\mathcal{G}$ if and only if $f(v_{1},v_{2},\ldots,v_{r})=0.$
\end{definition}

We pick $h$ independent random symmetric polynomials $f_{1},f_{2},\ldots,f_{h}$ from $\mathcal{P}_{d}$ uniformly, and denote their associated hypergraphs as $\mathcal{G}_{1},\mathcal{G}_{2},\ldots,\mathcal{G}_{h}.$ Let $\bar{\mathcal{G}}$ be a multi-hypergraph which is the union of $\mathcal{G}_{1},\mathcal{G}_{2},\ldots,\mathcal{G}_{h}.$ In the multi-hypergraph $\bar{\mathcal{G}}$, let $T$ be a fixed labelled copy of $K_{s_{1},s_{2},\ldots,s_{r-1},1}^{(r)}$ and denote its vertices as $u$ and $v_{j}^{i}$ for $1\leqslant i\leqslant r-1,$ $1\leqslant j\leqslant s_{i}$ such that $v_{1}^{i},v_{2}^{i},\ldots,v_{s_{i}}^{i}$ are in the same specified part. Fix a sequence of vertices $w_{1}^{i},w_{2}^{i},\ldots,w_{s_{i}}^{i}$ for $1\leqslant i\leqslant r-1,$ and these vertices form $b$ distinct $(r-1)$-tuples according to the fixed labelled copy of $T$. Since every edge can be in one of $h$ distinct original hypergraphs $\mathcal{G}_{k},$ there are totally $h^{b}$ types of given labelled $T.$ Let $p$ be a positive integer and $W$ be the family of copies of $T$ which contains the fixed sequence $w_{1}^{i},w_{2}^{i},\ldots,w_{s_{i}}^{i}$ in $\bar{\mathcal{G}}$ for $1\leqslant i\leqslant r-1.$ We call a sequence of vertices $w_{1}^{i},w_{2}^{i},\ldots,w_{s_{i}}^{i}$ a $p$-bad sequence, if the corresponding set $W$ has size $|W|\geqslant p.$ Let $B_{p}$ be the set of all $p$-bad sequences in $\bar{\mathcal{G}}$.

\begin{lemma}\label{Lem:DegenerateBadSet}
   There exist constants $p$ and $C$ depending on $s_{1},s_{2},\ldots, s_{r-1},r$ such that
   \begin{equation*}
     \mathbb{E}[|B_{ph^{b}}|]\leqslant Ch^{b}N^{1-\frac{2}{b}}.
   \end{equation*}
\end{lemma}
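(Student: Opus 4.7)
The plan is to apply the dichotomy of Lemma~\ref{Lem:variety} to each ``type'' $\sigma$ to obtain a deterministic alternative, combine this with a high-moment estimate derived from Lemma~\ref{Lem:probability} to get a power saving in $q$, and then union bound. For a fixed sequence $w=(w_{j}^{i})$ and a fixed assignment $\sigma\colon[s_{1}]\times\cdots\times[s_{r-1}]\to[h]$, define
\[
W_{\sigma}=\bigl\{u\in\mathbb{F}_{q}^{b}:f_{\sigma(j_{1},\dots,j_{r-1})}(w_{j_{1}}^{1},\dots,w_{j_{r-1}}^{r-1},u)=0\ \text{for all}\ (j_{1},\dots,j_{r-1})\bigr\}.
\]
Then $W_{\sigma}$ consists of the $\mathbb{F}_{q}$-points of a variety cut out by $b$ polynomials in $b$ variables of degree at most $d$, so its complexity is bounded by some $M=M(s_{1},\dots,s_{r-1},r)$. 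By Lemma~\ref{Lem:variety}, deterministically either $|W_{\sigma}|\leqslant c_{M}$ or $|W_{\sigma}|\geqslant q/2$. Since a copy of $T$ through the fixed sequence is exactly a pair $(u,\sigma)$ with $u\in W_{\sigma}$, we have $|W|=\sum_{\sigma}|W_{\sigma}|$. Taking $p=c_{M}+1$, if $w$ is $(ph^{b})$-bad then by pigeonhole some $|W_{\sigma}|\geqslant p>c_{M}$, which forces $|W_{\sigma}|\geqslant q/2$.

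Hence $\mathbb{P}[w\in B_{ph^{b}}]\leqslant\sum_{\sigma}\mathbb{P}\bigl[|W_{\sigma}|\geqslant q/2\bigr]$, and I will bound each summand by Markov's inequality applied to $|W_{\sigma}|^{s}$, where $s=b(t-1)+2$ is exactly the value chosen in Definition~\ref{Def:DegenerateRandomGraph}. Expanding $\mathbb{E}[|W_{\sigma}|^{s}]$ as a sum over $s$-tuples $(u_{1},\dots,u_{s})$ and stratifying by the number $m\leqslant s$ of distinct values among them, the independence of $f_{1},\dots,f_{h}$ splits the joint event into one per polynomial $f_{k}$. For each $k$, the relevant $r$-subsets are $\{w_{j_{1}}^{1},\dots,w_{j_{r-1}}^{r-1},u_{a}\}$ with $\sigma(j_{1},\dots,j_{r-1})=k$ and $a$ ranging over the $m$ distinct $u$-values; these are pairwise distinct because the $b$ $(r-1)$-tuples from $w$ are distinct. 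Their total count over all $k$ is $bm\leqslant bs=d$, so Lemma~\ref{Lem:probability} gives probability $q^{-bm}$, and the $m$-th stratum contributes at most $O(N^{m}q^{-bm})=O(1)$. Hence $\mathbb{E}[|W_{\sigma}|^{s}]=O(1)$ and $\mathbb{P}[|W_{\sigma}|\geqslant q/2]=O(q^{-s})$.

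Summing over the $h^{b}$ types and the at most $N^{t}$ sequences, using $N=q^{b}$, one gets
\[
\mathbb{E}[|B_{ph^{b}}|]\;\leqslant\;N^{t}\cdot h^{b}\cdot O(q^{-s})\;=\;O\!\bigl(h^{b}q^{bt-s}\bigr)\;=\;O\!\bigl(h^{b}q^{b-2}\bigr)\;=\;O\!\bigl(h^{b}N^{1-2/b}\bigr),
\]
since $bt-s=bt-b(t-1)-2=b-2$. The main obstacle is the moment computation: the exponent $s=b(t-1)+2$ in the construction is engineered so that the polynomial degree $d=bs$ is just large enough for Lemma~\ref{Lem:probability} to apply to all $bm\leqslant bs$ relevant subsets at once, \emph{and} so that the bookkeeping produces precisely the exponent $b-2$ in $q$. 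The subtle point in verifying this is distinctness of the $bm$ relevant $r$-subsets, which is what guarantees the probability is exactly $q^{-bm}$ (rather than larger) and which relies on the standing assumption that $w$ forms $b$ distinct $(r-1)$-tuples and on the chosen stratification into $m$ distinct $u_{a}$'s.
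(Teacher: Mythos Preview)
Your proof is correct and follows essentially the same approach as the paper's. Both arguments fix a type $\sigma$ (the paper calls it $I$), note that $W_{\sigma}$ is the $\mathbb{F}_q$-points of a variety of complexity bounded in terms of $s_1,\dots,s_{r-1},r$ so that Lemma~\ref{Lem:variety} yields the dichotomy $|W_{\sigma}|\leqslant c_M$ or $|W_{\sigma}|\geqslant q/2$, compute $\mathbb{E}[|W_{\sigma}|^{s}]=O(1)$ via Lemma~\ref{Lem:probability} and the independence of $f_1,\dots,f_h$, apply Markov to get $\mathbb{P}[|W_{\sigma}|\geqslant q/2]=O(q^{-s})$, and then union bound over the $h^{b}$ types and the $O(N^{t})$ sequences to obtain $O(h^{b}q^{bt-s})=O(h^{b}N^{1-2/b})$. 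The only cosmetic difference is that the paper organizes the moment computation by which $K_{s_1,\dots,s_{r-1},m}^{(r)}\in\mathcal{K}$ the ordered $s$-tuple spans, whereas you stratify directly by the number $m$ of distinct $u$-values; these are the same stratification.
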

\begin{proof}
  Fix a type $I\in[h]^{b},$ call a sequence of vertices $\{w_{1}^{i},w_{2}^{i},\ldots,w_{s_{i}}^{i} : 1\leqslant i\leqslant r-1\}$ a $(p,I)$-bad sequence if the corresponding set $W_{I}$ has size $|W_{I}|\geqslant p,$ where $p$ will be determined later. By the linearity of expectation, it suffices to prove that the expected number of $(p,I)$-bad sequences is $O(N^{1-\frac{2}{b}})$ since the total number of types is $h^{b}.$

  Now we focus on the size of $W_{I}$. It is difficult to estimate $|W_{I}|$ directly, hence we consider the $s$-th moment of $|W_{I}|.$ Note that $|W_{I}|^{s}$ counts the number of ordered collections of $s$ copies of $T$ from $W_{I}$, and these copies of $T$ may be the same, hence each member of such collections can be an element $P$ in
\begin{equation*}
  \mathcal{K}:=\{K_{s_{1},s_{2},\ldots,s_{r-1},1}^{(r)},K_{s_{1},s_{2},\ldots,s_{r-1},2}^{(r)},\ldots,K_{s_{1},s_{2},\ldots,s_{r-1},s}^{(r)}\}.
\end{equation*}
 For given $P\in \mathcal{K},$ let $N_{s}(P)$ be the number of all possible ordered collections of $s$ copies of $T\in W_{I}$ which appear in $\bar{\mathcal{G}}$ as a copy of $P$. Note that the number of unfixed vertices in $P$ is $|P|-t,$ so $N_{s}(P)=O(n^{|P|-t}).$ The edge set of $P$ can be written as
 \begin{equation*}
   E(P)=E(P_{1})\cup E(P_{2})\cup\cdots\cup E(P_{h})
 \end{equation*}
 according to the fixed type $I,$ where $E(P_{i})$ consists of edges from the original hypergraph $\mathcal{G}_{i},$ $i=1,2,\ldots,h.$ Since the random hypergraphs $\mathcal{G}_{1},\mathcal{G}_{2},\ldots,\mathcal{G}_{h}$ are picked independently, by Lemma~\ref{Lem:probability}, we have
 \begin{equation*}
   \mathbb{E}[|W|^{s}]=\sum\limits_{P\in\mathcal{K}}N_{s}(P)\prod\limits_{i=1}^{h}q^{e(P_{i})}=\sum\limits_{P\in\mathcal{K}}O(N^{|P|-t})\cdot q^{b(|P|-t)}=O(1).
 \end{equation*}

For a fixed type $I$, $W_{I}$ is a variety which consists of vertices $x\in \mathbb{F}_{q}^{b}$ satisfying the system of $b$ equations $f_{k}(w_{j_{1}}^{1},w_{j_{2}}^{2},\ldots,w_{j_{r-1}}^{r-1},x)=0$ for all $1\leqslant i\leqslant r-1$ and $1\leqslant j_{i}\leqslant s_{i},$ where the choice of $k$ for certain edge $(w_{j_{1}}^{1},w_{j_{2}}^{2},\ldots,w_{j_{r-1}}^{r-1},x)$ only depends on the fixed type $I$. Note that every random polynomial $f_{k}(w_{j_{1}}^{1},w_{j_{2}}^{2},\ldots,w_{j_{r-1}}^{r-1},x)$ is chosen from $\mathcal{P}_{d},$ we have that $W_{I}$ has complexity at most $bs.$ By Lemma \ref{Lem:variety}, either $|W_{I}|\leqslant c_{I}$ or $|W_{I}|\geqslant \frac{q}{2}.$ Then we can use the Markov's inequality to bound the probability as
\begin{equation*}
  \mathbb{P}[|W_{I}|> c_{I}]=\mathbb{P}[|W_{I}|\geqslant \frac{q}{2}]=\mathbb{P}[|W_{I}|^{s}\geqslant (\frac{q}{2})^{s}]\leqslant \frac{\mathbb{E}[|W_{I}|^{s}]}{(\frac{q}{2})^{s}}=\frac{O(1)}{q^{s}}.
\end{equation*}
Let $p=\max\limits_{I\in [h]^{b}}{c_{I}},$ the expected number of $(p,I)$-bad sequences is at most $t!N^{t}\cdot\frac{O(1)}{q^{s}}=O(N^{1-\frac{2}{b}}).$ By the linearity of expectation, Lemma \ref{Lem:DegenerateBadSet} follows since the total number of types is $h^{b}.$
\end{proof}

Now we are ready to prove our main result of complete $r$-partite $r$-uniform hypergraphs.
\begin{proof}[\textbf{Proof of Theorem \ref{Thm:Degenerate}}]
   Let $\bar{\mathcal{G}}$ be the multi-hypergraph defined as above. It is easy to see the expected number of edges in $\bar{\mathcal{G}}$ is $\frac{h}{q}\binom{N}{r}.$ Let $e_{M}$ be the number of multiple edges, we can bound the expected number of $e_{M}$ as
  \begin{equation*}
    \mathbb{E}[e_{M}]\leqslant \binom{N}{r}\sum\limits_{i=2}^{h}\binom{h}{i}q^{-i}=O(N^{r-\frac{2}{b}}).
  \end{equation*}
  Moreover, by Lemma \ref{Lem:DegenerateBadSet}, the expected number of $ph^{b}$-bad sequences is at most $Ch^{b}N^{1-\frac{2}{b}}.$ We remove all of the multiple edges and remove one vertex from each $ph^{b}$-bad sequence to obtain a new hypergraph $\mathcal{G}',$ since each vertex is contained in at most $O(N^{r-1})$ edges, hence the expected number of edges in $\mathcal{G}'$ is at least
  \begin{equation*}
    \frac{h}{q}\binom{N}{r}-\binom{N}{r}\sum\limits_{i=2}^{h}\binom{h}{i}q^{-i}-O(N^{r-1})Ch^{b}N^{1-\frac{2}{b}}.
  \end{equation*}
  When $s_{r}$ is sufficiently large, we choose $h=(\frac{s_{r}}{p})^{\frac{1}{b}},$ then there exists a $K_{s_{1},s_{2},\ldots,s_{r}}^{(r)}$-free hypergraph with $\Omega(s_{r}^{\frac{1}{s_{1}s_{2}\ldots s_{r-1}}}n^{r-\frac{1}{s_{1}s_{2}\ldots s_{r-1}}})$ edges, the proof of Theorem \ref{Thm:Degenerate} is finished.
\end{proof}

\subsection{Complete bipartite $r$-uniform hypergraphs}
In this subsection, we consider the Tur\'{a}n number of complete bipartite $r$-uniform hypergraph $K_{s,t}^{(r)}.$ We still take advantage of the construction in \cite{XuZhangGe2019}.

\begin{definition}[\cite{XuZhangGe2019}]
  For given integers $t$ and $r,$ let $N=q^{t},$ $m=(r-1)t^{2}-t+2,$ and $d=mt,$ we pick a symmetric polynomial $f$ from $\mathcal{P}_{d}$ uniformly at random. Then we define an $r$-uniform hypergraph $\mathcal{H}$ on $N$ vertices as following: the vertex set is a copy of $\mathbb{F}_{q}^{t},$ and the $r$-tuple $\{v_{1},v_{2},\ldots,v_{r}\}\in \binom{\mathbb{F}_{q}^{t}}{r}$ forms an edge of $\mathcal{H}$ if and only if $f(v_{1},v_{2},\ldots,v_{r})=0.$
\end{definition}
We then choose $h$ independent random symmetric polynomials $f_{1},f_{2},\ldots,f_{h}$ from $\mathcal{P}_{d}$ uniformly, and denote their associated hypergraphs as $\mathcal{H}_{1},\mathcal{H}_{2},\ldots,\mathcal{H}_{h}.$ Let $\bar{\mathcal{H}}$ be a multi-hypergraph which is the union of $\mathcal{H}_{1},\mathcal{H}_{2},\ldots,\mathcal{H}_{h}.$ In the multi-hypergraph $\bar{\mathcal{H}}$, let $R$ be a fixed labelled copy of $K_{1,t}^{(r)},$ and we denote its vertices as $a$ and $u_{j}^{i}$ for $1\leqslant j\leqslant t$ and $i\in [r-1]$ such that $u_{j}^{1},u_{j}^{2},\ldots,u_{j}^{r-1}$ form $t$ distinct $(r-1)$-tuples corresponding to the fixed labelled copy of $R$. Since each edge of $R$ can belong to one of $h$ distinct original hypergraphs $\mathcal{H}_{k},$ there are in total $h^{t}$ types of given labelled copy of $R.$ Now fix any sequence of vertices $w_{j}^{i}$ for $1\leqslant j\leqslant t$ and $i\in [r-1]$ in $\bar{\mathcal{H}}$.  Let $W$ be the family of copies of $R$ in $\bar{\mathcal{H}}$ such that $w_{j}^{i}$ corresponds to $u_{j}^{i}$ for all $1\leqslant j\leqslant t$ and $i\in [r-1]$. We say such a sequence $p$-bad if the corresponding set $W$ satisfies $|W|\geqslant p.$ Let $B_{p}$ be the set of all $p$-bad sequences in the multi-hypergraph $\bar{\mathcal{H}}.$

\begin{lemma}\label{Lem:BipartiteBadSet}
    There exist constants $p=p(t,r)$ and $C=C(t,r)$ such that
   \begin{equation*}
     \mathbb{E}[|B_{ph^{t}}|]\leqslant Ch^{t}q^{t-2}.
   \end{equation*}
\end{lemma}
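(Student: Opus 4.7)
The plan is to follow the structure of the proof of Lemma~\ref{Lem:DegenerateBadSet}: partition the $ph^{t}$-bad sequences according to their type $I \in [h]^{t}$ (the assignment of each of the $t$ edges of $R$ to one of the random hypergraphs $\mathcal{H}_{1}, \ldots, \mathcal{H}_{h}$), and for each fixed type control $|W_{I}|$ using the variety dichotomy of Lemma~\ref{Lem:variety} combined with a high-moment estimate. For a fixed type $I$ and fixed $(r-1)$-tuples $(w_{j}^{i})$, the set $W_{I}$ is the variety defined in $\mathbb{F}_{q}^{t}$ by the $t$ equations
\begin{equation*}
    f_{I_{j}}(w_{j}^{1}, w_{j}^{2}, \ldots, w_{j}^{r-1}, x) = 0, \quad j \in [t],
\end{equation*}
whose complexity is bounded by a constant depending only on $t$ and $r$. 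Hence Lemma~\ref{Lem:variety} supplies a constant $c_{I}$ such that either $|W_{I}| \leqslant c_{I}$ or $|W_{I}| \geqslant q/2$.

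The main computational step is to show $\mathbb{E}[|W_{I}|^{m}] = O(1)$ with $m = (r-1)t^{2} - t + 2$. Expanding the moment, $|W_{I}|^{m}$ counts ordered $m$-tuples of centers $(a_{1}, \ldots, a_{m})$ such that each $r$-tuple $\{w_{j}^{1}, \ldots, w_{j}^{r-1}, a_{i}\}$ is an edge of $\mathcal{H}_{I_{j}}$. Grouping by which of the $a_{i}$ coincide, each term corresponds to a configuration with some $s' \leqslant m$ distinct centers; the number of such placements is $O(q^{ts'})$. In such a configuration, the polynomial $f_{k}$ must vanish on $s_{k} s'$ prescribed $r$-tuples, where $s_{k} = |\{j : I_{j} = k\}|$. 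Since $s_{k} s' \leqslant mt = d$ and all auxiliary cardinalities are bounded by constants (so the hypotheses hold for $q$ sufficiently large), Lemma~\ref{Lem:probability} applies separately to each $f_{k}$, and the independence of $f_{1}, \ldots, f_{h}$ yields the joint probability $\prod_{k=1}^{h} q^{-s_{k}s'} = q^{-ts'}$. The powers of $q$ cancel, and summing over $s' = 1, \ldots, m$ produces a constant, giving $\mathbb{E}[|W_{I}|^{m}] = O(1)$.

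Markov's inequality then yields
\begin{equation*}
    \mathbb{P}[|W_{I}| > c_{I}] = \mathbb{P}[|W_{I}|^{m} \geqslant (q/2)^{m}] = O(q^{-m}).
\end{equation*}
Setting $p = \max_{I \in [h]^{t}} c_{I}$, the expected number of $(p, I)$-bad sequences is at most $N^{(r-1)t} \cdot O(q^{-m}) = O(q^{t^{2}(r-1) - m}) = O(q^{t-2})$, where the last equality is precisely the calibration achieved by the choice of $m$. Summing over the $h^{t}$ types by linearity of expectation gives $\mathbb{E}[|B_{ph^{t}}|] \leqslant C h^{t} q^{t-2}$ as claimed. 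The main technical subtlety lies in tuning the moment exponent $m$ so that the Markov savings $q^{-m}$ overwhelm the $N^{(r-1)t} = q^{t^{2}(r-1)}$ trivial count of $(r-1)t$-tuples while still preserving a $q^{t-2}$ factor; this balance is exactly what the specific definition $m = (r-1)t^{2} - t + 2$ in the construction is designed to achieve, and is what forces the choice $d = mt$ so that Lemma~\ref{Lem:probability} remains applicable throughout the moment expansion.
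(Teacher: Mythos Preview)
Your proof is correct and follows essentially the same approach as the paper. The only cosmetic difference is that the paper packages the moment computation by indexing the possible unions of $m$ copies of $R$ as elements of $\mathcal{L}=\{K_{1,t}^{(r)},\ldots,K_{m,t}^{(r)}\}$, whereas you equivalently group directly by the number $s'$ of distinct centers; in either formulation the $q^{ts'}$ count cancels against the $q^{-ts'}$ probability, and the Markov step with $m=(r-1)t^{2}-t+2$ is identical.
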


\begin{proof}
Fix a type $J\in[h]^{t},$ call a sequence of vertices $\{w_{j}^{1},w_{j}^{2},\ldots,w_{j}^{r-1} : 1\leqslant j\leqslant t\}$ a $(p,J)$-bad sequence if the corresponding set $W_{J}$ has cardinality $|W_{J}|\geqslant p$ with $p$ to be determined later. In the following we will prove that the expected number of $(p,J)$-bad sequences is $O(q^{t-2}).$

 We prefer to bound the value of $|W_{J}|^{m}$ rather than estimate $|W_{J}|$ directly. Note that $|W_{J}|^{m}$ counts the number of ordered collections of $m$ copies of $R$ from $W_{J}$, where these copies of $R$ may be identical. So each member of such collections can be an element $L$ in
\begin{equation*}
  \mathcal{L}:=\{K_{1,t}^{(r)},K_{2,t}^{(r)},\ldots,K_{m,t}^{(r)}\}.
\end{equation*}
For given $L\in \mathcal{L}$, denote $N_{m}(L)$ as the total number of all possible ordered collections of $m$ copies of $R\in W_{J}$, which could appear in $\bar{\mathcal{H}}$ as a copy of $L$. Note that the number of unfixed vertices in $L$ is $|L|-t(r-1),$ so $N_{m}(L)=O(q^{t(|L|-t(r-1))}).$ On the other hand, according to the type $J\in[h]^{t},$ the edge set $E(L)$ can be written as
\begin{equation*}
  E(L)=E(L_{1})\cup E(L_{2})\cup\cdots\cup E(L_{h}),
\end{equation*}
where $E(L_{i})$ consists of edges from the original hypergraph $\mathcal{H}_{i},$ $i=1,2,\ldots,h.$ Since the random hypergraphs $\mathcal{H}_{1},\mathcal{H}_{2},\ldots,\mathcal{H}_{h}$ are picked independently, by Lemma~\ref{Lem:probability}, we have
\begin{equation*}
  \mathbb{E}[|W_{J}|^{m}]=\sum\limits_{L\in \mathcal{L}}N_{m}(L)\prod\limits_{i=1}^{h}q^{-e(L_{i})}=\sum\limits_{L\in \mathcal{L}}O(q^{t(|L|-t(r-1))})q^{-e(L)}=O(1).
\end{equation*}
Note that $W_{J}$ is a variety which consists of vertices $x\in \mathbb{F}_{q}^{t}$ satisfying the system of $t$ equations
\begin{equation*}
  f_{k}(w_{j}^{1},w_{j}^{2},\ldots,w_{j}^{r-1},x)=0
  \end{equation*}
  for $1\leqslant j\leqslant t.$ The choice of $k$ is dependent on the fixed type $J,$ and $f_{k}$ is the random polynomial used to define random hypergraph $\mathcal{H}_{k}.$ It is easy to check that for each $k\in [h],$ the random polynomial $f_{k}(w_{j}^{1},w_{j}^{2},\ldots,w_{j}^{r-1},x)$ has degree at most $d,$ hence the variety $W_{J}$ has complexity at most $d.$ Then by Lemma \ref{Lem:variety}, either $|W_{J}|\leqslant c_{J}$ or $|W_{J}|\geqslant \frac{q}{2},$ where $c_{J}$ is dependent on $d$ and the type $J.$ With the Markov's inequality, we obtain that
  \begin{equation*}
    \mathbb{P}[|W_{J}|> c_{J}]=\mathbb{P}[|W_{J}|\geqslant \frac{q}{2}]=\mathbb{P}[|W_{J}|^{m}\geqslant (\frac{q}{2})^{m}]\leqslant \frac{\mathbb{E}[|W_{J}|^{m}]}{(\frac{q}{2})^{m}}=\frac{O(1)}{q^{m}}.
  \end{equation*}
  Set $p=\max\limits_{J\in [h]^{t}}c_{J},$ the expected number of $(p,J)$-bad sequences is at most $(t(r-1))!N^{t(r-1)}\cdot \frac{O(1)}{q^{m}}=O(q^{t-2}).$ Since the number of types $J\in [h]^{t}$ is $h^{t},$ then Lemma \ref{Lem:BipartiteBadSet} follows by the linearity of expectation.
\end{proof}

Now we are ready to prove the main result of complete bipartite $r$-uniform hypergraphs.

\begin{proof}[\textbf{Proof of Theorem \ref{Thm:bipartite}}]
  Let $\bar{\mathcal{H}}$ be the random multi-hypergraph defined as above. Then by Lemma \ref{Lem:probability}, the expected number of edges in $\bar{\mathcal{H}}$ is $\frac{h}{q}\binom{N}{r}.$ Let $e_{M}$ be the number of multiple edges, we can bound the expected number of $e_{M}$ as
  \begin{equation*}
    \mathbb{E}[e_{M}]\leqslant \binom{N}{r}\sum\limits_{i=2}^{h}\binom{h}{i}q^{-i}=O(N^{r-\frac{2}{t}}).
  \end{equation*}
    As we have shown in Lemma \ref{Lem:BipartiteBadSet}, the expected number of $ph^{t}$-bad sequences is at most $Ch^{t}q^{t-2}.$ We remove all of the multiple edges and remove one vertex from each $ph^{t}$-bad sequence to obtain a new hypergraph $\mathcal{H}',$ since each vertex is contained in at most $O(N^{r-1})$ edges, hence the expected number of edges in $\mathcal{H}'$ is at least
  \begin{equation*}
    \frac{h}{q}\binom{N}{r}-\binom{N}{r}\sum\limits_{i=2}^{h}\binom{h}{i}q^{-i}-O(N^{r-1})Ch^{t}q^{t-2}.
  \end{equation*}
  When $s$ is sufficiently large, let $h=(\frac{s}{p})^{\frac{1}{t}},$ then there exists a $K_{s,t}^{(r)}$-free hypergraph with $\Omega(s^{\frac{1}{t}}n^{r-\frac{1}{t}})$ edges. The proof of Theorem \ref{Thm:bipartite} is finished.
\end{proof}

\subsection{Berge theta hypergraphs}
 Now we define a random hypergraph model that we will use in our construction.

 \begin{definition}\label{Def:BergeRandomgraph}
  For given integers $r$ and $\ell,$ let $d=r\ell^{2}$ and $N=q^{\ell},$ we pick $\ell(r-1)-1$ symmetric polynomials $f_{1},f_{2},\ldots,f_{\ell(r-1)-1}$ from $\mathcal{P}_{d}$ uniformly at random. Let $\mathcal{F}$ be an $r$-partite $r$-uniform hypergraph $\mathcal{F}$ on $rN$ vertices as following: the vertex set $V(\mathcal{F})=\{V_{1},V_{2},\ldots,V_{r}\}$ is $r$ distinct copies of $\mathbb{F}_{q}^{\ell},$ and for $v_{i}\in V_{i},$ $1\leqslant i\leqslant r,$ the $r$-tuple $\{v_{1},v_{2},\ldots,v_{r}\}$ forms an edge of $\mathcal{F}$ if and only if
   $$f_{1}(v_{1},v_{2},\ldots,v_{r})=f_{2}(v_{1},v_{2},\ldots,v_{r})=\cdots =f_{\ell(r-1)-1}(v_{1},v_{2},\ldots,v_{r})=0.$$
 \end{definition}

  We pick $h$ random hypergraphs $\mathcal{F}_{1},\mathcal{F}_{2},\ldots,\mathcal{F}_{h}$ independently and let $\mathcal{\bar{F}}$ be a multi-hypergraph which is the union of the $\mathcal{F}_{i}.$ For the random multi-hypergraph $\bar{\mathcal{F}}$ and positive integer $p,$ we say that a pair of vertices $x,y$ is $p$-bad if there are at least $p$ Berge paths of length at most $\ell$ between $x$ and $y$. Now we need to bound the number of $ph^{\ell}$-bad pairs in $\bar{\mathcal{F}}.$

 \begin{lemma}\label{Lem:BadPairBound}
   Let $B_{ph^{\ell}}$ be the set of all $ph^{\ell}$-bad pairs in $\mathcal{\bar{F}}$, there exist constants $p=p(r,\ell)$ and $C=C(r,\ell)$ such that
   \begin{equation*}
     \mathbb{E}[|B_{ph^{\ell}}|]\leqslant Ch^{\ell}q^{\ell(2-r)}.
   \end{equation*}
 \end{lemma}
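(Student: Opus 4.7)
The plan is to mirror the strategy of Lemmas \ref{Lem:DegenerateBadSet} and \ref{Lem:BipartiteBadSet}: stratify bad pairs according to a ``type'' that records which of the $h$ independent hypergraphs $\mathcal{F}_k$ hosts each Berge-path edge, bound a high moment of the refined count using Lemma \ref{Lem:probability}, and then pass to a pointwise bound via the variety dichotomy of Lemma \ref{Lem:variety} combined with Markov's inequality.

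Concretely, for each path length $L\in\{1,\ldots,\ell\}$, each type $T\in[h]^L$, and each combinatorial skeleton $S$ that records the part $V_i$ containing each of the $L-1$ interior vertices and the $L(r-2)$ extra vertices (together with their in-edge positions), I would call a pair $(x,y)$ an $(L,T,S)$-\emph{rich} pair if the set $W_{L,T,S}(x,y)$ of Berge $x$-$y$ paths with these data, whose $j$-th edge lies in $\mathcal{F}_{T(j)}$, satisfies $|W_{L,T,S}(x,y)|\geq p$. Viewed as a subvariety of $\mathbb{F}_q^{\ell(Lr-L-1)}$ cut out by $L(\ell(r-1)-1)$ polynomials of degree at most $d=r\ell^2$, $W_{L,T,S}(x,y)$ has complexity bounded in $r,\ell$, so Lemma~\ref{Lem:variety} yields either $|W_{L,T,S}(x,y)|\leq c_{r,\ell}$ or $|W_{L,T,S}(x,y)|\geq q/2$, and I set $p:=c_{r,\ell}+1$. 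Since the number of triples $(L,T,S)$ is at most $C'(r,\ell)h^\ell$, a pigeonhole step reduces the lemma to showing that for each fixed triple, the expected number of $(L,T,S)$-rich pairs is $O(q^{\ell(2-r)})$.

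For this, I would compute the $m$-th moment of $|W_{L,T,S}(x,y)|$ with $m:=r\ell$. Expanding as a sum over ordered $m$-tuples of paths and grouping by the union type $U$ (which records which vertices and edges coincide), each $U$ with $v$ distinct free vertices and $e_k$ distinct edges in $\mathcal{F}_k$ contributes $O(q^{\ell v})\prod_k q^{-e_k(\ell(r-1)-1)}$ by independence of the $\mathcal{F}_k$ and Lemma~\ref{Lem:probability} applied to each. A direct dimension count gives the identity $\ell(Lr-L-1)=L(\ell(r-1)-1)-(\ell-L)\leq L(\ell(r-1)-1)$ for each single path, and identifying vertices or edges across distinct paths only strengthens this inequality (sharing one interior vertex decreases $v$ by $1$ without changing any $e_k$, while sharing an edge forces $\Delta v\geq r-1$ against $\Delta e=1$). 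Hence $\mathbb{E}[|W_{L,T,S}(x,y)|^m]=O(1)$, and Markov's inequality gives $\mathbb{P}[|W_{L,T,S}(x,y)|\geq q/2]=O(q^{-r\ell})$. Summing over the $O(q^{2\ell})$ endpoint pairs yields the per-triple bound $O(q^{\ell(2-r)})$, and summing over the $O(h^\ell)$ triples completes the proof.

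The main obstacle is the dimension bookkeeping inside the moment computation. Because each edge is cut out by $\ell(r-1)-1$ polynomials rather than a single one, and because two paths sharing a vertex or edge may or may not sit inside the same $\mathcal{F}_k$, verifying that every union type contributes $O(1)$ requires a case analysis that is noticeably more involved than in the two preceding subsections. The equation-to-dimension balance built into Definition~\ref{Def:BergeRandomgraph} (via $d=r\ell^2$ and the count $\ell(r-1)-1$) is precisely what forces the worst-case configuration to remain $O(1)$, and the side condition that Lemma~\ref{Lem:probability} requires the $\mathcal{F}_k$-edges within a union type to form distinct $r$-tuples will be verified as part of the same case analysis.
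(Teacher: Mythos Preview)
Your plan is essentially the paper's proof, and the overall architecture (stratify by length and type, variety dichotomy plus Markov on a high moment, then sum over $O(q^{2\ell})$ pairs and $O(h^\ell)$ types) is correct. Two points deserve attention.

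First, there is a genuine gap in the variety step. The set $W_{L,T,S}(x,y)$ is \emph{not} a subvariety: a Berge path requires its core vertices $x=v_0,v_1,\ldots,v_{L}=y$ to be pairwise distinct (and its edges distinct), so the zero locus of your $L(\ell(r-1)-1)$ polynomials also contains degenerate walks. The paper handles this by writing $S_{K,\sigma}=T_{K,\sigma}\setminus D$ with $D=\bigcup_{a<b}\{v_a=v_b\}$ and invoking the full strength of Lemma~\ref{Lem:variety}, which gives the dichotomy for $W(\mathbb{F}_q)\setminus D(\mathbb{F}_q)$ rather than for a bare variety. Without subtracting $D$ you cannot conclude the dichotomy for the path count itself, so you should insert this step.

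Second, for the moment bound $\mathbb{E}[|S_K|^{r\ell}]=O(1)$ the paper does not carry out the vertex/edge-sharing case analysis you outline; it quotes the He--Tait estimate $P_{\ell_0,m}=O_{\ell_0,r}(q^{m(\ell_0(r-1)-1)})$ for the number of unions of Berge paths with $m$ edges, after which the bound is a one-line computation. Your direct argument is aiming at the same inequality $\ell v\leq e(\ell(r-1)-1)$, but the heuristic ``sharing an edge forces $\Delta v\geq r-1$ against $\Delta e=1$'' hides exactly the work that He--Tait do: a shared edge may overlap already-shared vertices (including $x$ or $y$), and edges can coincide as $r$-sets across different positions or different $\mathcal{F}_k$. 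If you pursue the direct route you will have to reproduce that lemma; otherwise, citing it as the paper does is cleaner.
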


 \begin{proof}
   Let $\ell_{0}\leqslant \ell$ be an integer and $K=(k_{1},k_{2}\ldots,k_{\ell_{0}})\in [h]^{\ell_{0}}$ be a fixed type. A Berge path made of edges $e_{1},e_{2},\ldots,e_{\ell_{0}}$ is of type $(k_{1},k_{2},\ldots,k_{\ell_{0}})$ if $e_{j}\in E(\mathcal{F}_{k_{j}})$ for $1\leqslant j\leqslant \ell_{0}.$ For some fixed type $K=(k_{1},k_{2},\ldots,k_{\ell_{0}})\in [h]^{\ell_{0}}$, we say that a pair of vertices $x,y$ is $(p,K)$-bad if there are at least $p$ Berge paths of type $K$ between $x$ and $y$. Since the total number of types is $\sum\limits_{\ell_{0}\leqslant\ell}h^{\ell_{0}}\leqslant \ell h^{\ell},$ we then show that for each fixed type $K,$ there is a constant $p=p(r,\ell)$ such that the expected number of $(\frac{p}{\ell},K)$-bad pairs is $O_{r,\ell}(q^{\ell(2-r)}).$

   The first step is to estimate the expected number of short Berge paths between pairs of vertices. Let $x$ and $y$ be fixed vertices in $\bar{\mathcal{F}}$ and $K=(k_{1},k_{2},\ldots,k_{\ell_{0}})$ be a fixed type. Denote $S_{K}$ as the set of Berge paths of type $K$ between $x$ and $y$. It is difficult to estimate $|S_{K}|$ directly, hence we consider the value of $|S_{K}|^{r\ell},$ which counts the number of ordered collections of $r\ell$ Berge paths of type $K$ from $x$ to $y$. These Berge paths can be overlapping or
identical, and the total number of hyperedges in any collection of $r\ell$ paths is at most $e\ell\ell_{0}.$

      Let $P_{\ell_{0},m}$ be the number of collections of Berge paths between $x$ and $y$ such that their union has $m$ edges in total. Note that the edge set of any particular collection $Y_{m}$ with $m$ edges in $\bar{\mathcal{F}}$ can be written as
      \begin{equation*}
        E(Y_{m})=E(Y_{m,1})\cup E(Y_{m,2})\cup\cdots\cup E(Y_{m,h})
      \end{equation*}
      according to the type $K,$ where $E(Y_{m,i})$ consists of edges from the original hypergraph $\mathcal{F}_{i},$ $i=1,2,\ldots,h.$ Since the random hypergraphs $\mathcal{F}_{1},\mathcal{F}_{2},\ldots,\mathcal{F}_{h}$ are picked independently, by Lemma~\ref{Lem:probability}, we obtain that for $m\leqslant r\ell^{2},$ the probability of any particular collection $Y_{m}$ with $m$ edges is contained in $\bar{\mathcal{F}}$ is $\prod\limits_{i=1}^{h}q^{e(Y_{m,i})(1-\ell(r-1))}=q^{m(1-\ell(r-1))}.$ Hence, we obtain that
   \begin{equation*}
     \mathbb{E}[|S_{K}|^{r\ell}]=\sum\limits_{m=1}^{r\ell^{2}}P_{\ell_{0},m}q^{m(1-\ell(r-1))}.
   \end{equation*}
   We use the argument of He and Tait \cite{TaitBergetheta2018}, which showed that
   \begin{equation*}
     P_{\ell_{0},m}=O_{\ell_{0},r}(q^{m(\ell_{0}(r-1)-1)}).
   \end{equation*}
Hence we obtain that
\begin{equation*}
   \mathbb{E}[|S_{K}|^{r\ell}]=\sum\limits_{m=1}^{r\ell^{2}}P_{\ell_{0},m}q^{m(1-\ell(r-1))}\leqslant\sum\limits_{m=1}^{r\ell^{2}}1=C_{K},
\end{equation*}
where the last inequality holds since $\ell_{0}\leqslant \ell$.

In the next step, we will show that $|S_{K}|$ is either bounded by some constant or is at least $\frac{q}{2}$. However there is no fixed set of polynomials whose set of common roots is exactly $S_{K}$, hence it is difficult to write $S_{K}$ as a variety directly. We need to analyse the set $S_{K}$ as follows.

By the definition of Berge path, each Berge path of length $\ell_{0}$ in $S_{K}$ is a sequence of core vertices and edges such as $(x,e_{1},v_{1},e_{2},\ldots,v_{\ell_{0}-1},e_{\ell_{0}},y).$ We can partition the set of Berge paths into which partite set each core vertex $v_{i}$ is in. Hence for fixed type $K$, $S_{K}$ can be partitioned into disjoint sets depending on which partite set each core vertex belongs to. Denote $S_{t_{1},t_{2},\ldots,t_{\ell_{0}-1}}$ as the set of Berge paths from $x$ to $y$ such that the $i$-th core vertex $v_{i}\in V_{t_{i}}$. If we view $\sigma$ as $\ell_{0}-1$ tuple from $[r]^{\ell_{0}-1}$, then we can write $S_{K}$ as
\begin{equation*}
  S_{K}=\bigcup\limits_{\sigma\in [r]^{\ell_{0}-1}}S_{K,\sigma},
\end{equation*}
and obviously it is a disjoint union.

Fix any $S_{K,\sigma},$ we denote the core vertices in an arbitrary Berge path of length $\ell_{0}$ as $v_{1},v_{2},\ldots,v_{\ell_{0}-1}$ and the non-core vertices in edge $e_{j}$ as $w_{1}^{k_{j}},w_{2}^{k_{j}},\ldots,w_{r-2}^{k_{j}},$ where edge $e_{j}$ is in $\mathcal{F}_{k_{j}}.$ Here we need to make sure that the non-core vertices are ordered based on their partite sets, that is, if $w_{s_{1}}^{i}\in V_{t_{1}}$ and $w_{s_{2}}^{i}\in V_{t_{2}},$ where $s_{1}<s_{2}$, then $t_{1}<t_{2}.$

Now we define the variety $T_{K,\sigma}$ as
\begin{equation*}
  \{z\in \mathbb{F}_{q}^{\ell_{0}(r-1)-1} : f_{i,1}^{k_{1}}(z)=f_{i,2}^{k_{2}}(z)=\cdots=f_{i,\ell_{0}}^{k_{\ell_{0}}}(z)=0, 1\leqslant i\leqslant \ell_{0}(r-1)-1\},
\end{equation*}
where $f_{i,j}^{k_{j}}$ is the $j$-th random polynomial used to define $\mathcal{F}_{k_{j}}$ and $z\in \mathbb{F}_{q}^{\ell_{0}(r-1)-1}$ runs over sequence $(v_{1},\ldots,v_{\ell_{0}-1},w_{1}^{k_{1}},\ldots,w_{r-2}^{k_{1}},\ldots,w_{1}^{k_{\ell_{0}}},\ldots,w_{r-2}^{k_{\ell_{0}}}).$ Note that each $z$ is a vector ordered with the core vertices first and the non-core vertices after.

We then write the polynomials $f_{i,1}^{k_{1}}(z)=f_{i,2}^{k_{2}}(z)=\cdots=f_{i,\ell_{0}}^{k_{\ell_{0}}}(z)$ more accurately as follows.
\begin{flalign*}
  f_{i,1}^{k_{1}}(z) &= f_{i}^{k_{1}}(x,v_{1},w_{1}^{k_{1}},\ldots,w_{r-2}^{k_{1}}), \\
  f_{i,2}^{k_{2}}(z) &= f_{i}^{k_{2}}(v_{1},v_{2},w_{1}^{k_{2}},\ldots,w_{r-2}^{k_{2}}), \\
   & \cdots \\
  f_{i,\ell_{0}}^{k_{\ell_{0}}}(z) &= f_{i}^{k_{\ell_{0}}}(v_{\ell_{0}-1},y,w_{1}^{k_{\ell_{0}}},\ldots,w_{r-2}^{k_{\ell_{0}}}),
\end{flalign*}
where $1\leqslant i\leqslant \ell_{0}(r-1)-1.$ Observe that when $\sigma$ is fixed, the order of all non-core vertices is fixed, hence we also fix the order of arguments given to $f_{i}^{k_{j}}$ according to $\sigma.$ For example, suppose $v_{\ell_{0}-1}\in V_{1}$ and $y\in V_{3}$, then we write $f_{i,\ell_{0}}^{k_{\ell_{0}}}(z)$ as
\begin{equation*}
  f_{i,\ell_{0}}^{k_{\ell_{0}}}(z)=f_{i}^{k_{\ell_{0}}}(v_{\ell_{0}-1},w_{1}^{k_{\ell_{0}}},y,w_{2}^{k_{\ell_{0}}}\ldots,w_{r-2}^{k_{\ell_{0}}}).
\end{equation*}
It is easy to see that $S_{K,\sigma}\subset T_{K,\sigma}.$ However $T_{K,\sigma}$ contains not only all the Berge paths in $S_{K,\sigma}$ but also some degenerate walks which are not Berge paths, hence to obtain $S_{K,\sigma}$ we need to exclude the walks that are not Berge paths. If $T_{K,\sigma}$ contains a degenerate walk $x=v_{0},v_{1},v_{2},\ldots,v_{\ell_{0}-1},v_{\ell_{0}}=y,$ define
\begin{equation*}
  D_{a,b}\triangleq T_{K,\sigma}\cap \{v_{0},\ldots,v_{\ell_{0}},w_{1}^{k_{1}},\ldots, w_{r-2}^{k_{\ell_{0}}}: v_{a}=v_{b}\},
\end{equation*}
for $0\leqslant a<b\leqslant \ell_{0}$ and let $D\triangleq \bigcup\limits_{a,b}D_{a,b}.$ We claim that $D$ is a variety since the union of varieties is a variety, and the complexity of $D$ is bounded.

Now we can use Lemma \ref{Lem:variety} to analyse $S_{K,\sigma}=T_{K,\sigma}\setminus D$. For arbitrary type $K$ and $\sigma\in [r]^{\ell_{0}-1},$ there exists a constant $c(K,\sigma)$ which is dependent on $K$ and $\sigma$, such that either $|S_{K,\sigma}|\leqslant c(K,\sigma)$ or $|S_{K,\sigma}|\geqslant\frac{q}{2}.$ Note that $|S_{K}|=\sum\limits_{\sigma\in [r]^{\ell_{0}-1}}|S_{K,\sigma}|,$ if there exists a $\sigma$ such that $|S_{K,\sigma}|>c(K,\sigma)$, then $|S_{K}|\geqslant|S_{K,\sigma}|\geqslant\frac{q}{2},$ otherwise $|S_{K}|\leqslant c(K,\ell_{0},r)$ for some constant $c(K,\ell_{0},r)$ which is dependent on $\ell_{0}$ and $r.$ With the Markov's inequality, we obtain that
\begin{equation*}
  \mathbb{P}[|S_{K}|> c(K,\ell_{0},r)]=\mathbb{P}[|S_{K}|\geqslant\frac{q}{2}]=\mathbb{P}[|S_{K}|^{r\ell}\geqslant(\frac{q}{2})^{r\ell}]\leqslant \frac{\mathbb{E}(|S_{K}|^{r\ell})}{(\frac{q}{2})^{r\ell}}=\frac{C_{K}}{(\frac{q}{2})^{r\ell}}=O_{\ell_{0},r}(q^{-r\ell}).
\end{equation*}
Let $p\triangleq \ell \max\limits_{\ell_{0}\leqslant\ell}{c(K,\ell_{0},r)},$ then we have the expected number of $(\frac{p}{\ell},K)$-bad pairs is at most $(rN)^{2}\frac{C_{K}}{(\frac{q}{2})^{r\ell}}=O_{r,\ell}(q^{\ell(2-r)}).$ The proof of Lemma \ref{Lem:BadPairBound} is finished by linearity of expectation.
\end{proof}

\begin{proof}[\textbf{Proof of Theorem \ref{Thm:Bergelowerbound}}]
When $\ell\geqslant 2$, let $\bar{\mathcal{F}}$ be a multi-hypergraph defined as above. By Lemma \ref{Lem:BadPairBound}, there are constants $p=p(r,\ell)$ and $C=C(r,\ell)$ such that the expected number of $ph^{\ell}$-bad pairs is at most $Ch^{\ell}q^{\ell(2-r)}.$ Let $\mathcal{F}$ be obtained from $\bar{\mathcal{F}}$ by removing all of the multiple edges.

Since $\mathcal{F}_{1},\mathcal{F}_{2},\ldots,\mathcal{F}_{h}$ are independent random hypergraphs, by Lemma \ref{Lem:probability}, the expected number of edges in $\bar{\mathcal{F}}$ is $hq^{\ell+1}=h(\frac{N}{r})^{1+\frac{1}{\ell}}.$ Let $Y$ be the number of multiple edges, we can bound its expected number as following
\begin{equation*}
  \mathbb{E}[Y]\leqslant N^{r}\sum\limits_{i=2}^{h}\binom{h}{i}(\frac{1}{q^{\ell(r-1)-1}})^{i}=o(N).
\end{equation*}
Then we can remove all $ph^{\ell}$-bad pairs in $\mathcal{F}$ to obtain a new hypergraph $\mathcal{F}'.$ Since each vertex is in at most $O(n^{r-1})$ edges, at most $O(n^{r-1})|B_{ph^{\ell}}|$ edges are removed.
  Therefore the expected number of edges in $\mathcal{F}'$ is at least
  \begin{equation*}
    \mathbb{E}[e(\mathcal{F}')]\geqslant h(\frac{N}{r})^{1+\frac{1}{\ell}}-2N^{r-1}\mathbb{E}[|B_{ph^{\ell}}|]-N^{r}\sum\limits_{i=2}^{h}\binom{h}{i}(\frac{1}{q^{\ell(r-1)-1}})^{i}.
  \end{equation*}
When $t$ is sufficiently large, let $h=(\frac{t}{p})^{\frac{1}{\ell}}$, then there exists a hypergraph $\mathcal{F}'$ which is $\Theta_{\ell,t}^{B}$-free with $\Theta(n)$ vertices and $\Omega_{r,\ell}(t^{\frac{1}{\ell}}n^{1+\frac{1}{\ell}})$ edges.
\end{proof}

\section{Concluding remarks}\label{Sec:remarks}
In this paper, we mainly consider how the specified large parameter of forbidden hypergraph $\mathcal{H}$ affects the Tur\'{a}n number $\textup{ex}_{r}(n,\mathcal{H}).$ Using a variant of random algebraic method, we determine the dependence on such specified large constant for Tur\'{a}n number of complete $r$-partite $r$-uniform hypergraph and complete bipartite $r$-uniform hypergraph. In particular, our results can be reduced to the result of complete bipartite graph, which implies the dependence of  K\H{o}v\'{a}ri--S\'{o}s--Tur\'{a}n's upper bound on large $t$ is correct.

However, we fail in determining whether the upper bound  $\textup{ex}_{r}(n,\Theta_{\ell,t}^{B})=O_{\ell,r}(t^{r-1-\frac{1}{\ell}}n^{1+\frac{1}{\ell}})$ is tight when $t$ is large. We strongly believe this upper bound is tight, for instance, the results of Gerbner, Methuku and Vizer \cite{ImproveBergeK2t} determined the asymptotics for $\textup{ex}_{3}(n,\Theta_{2,t}^{B})=(1+o(1))\frac{1}{6}(t-1)^{\frac{3}{2}}n^{\frac{3}{2}}$. Moreover, for some relatively small $\ell$ and $t$, determining $\textup{ex}_{r}(n,\Theta_{\ell,t}^{B})$ is also of great interest.

\section{Acknowledgement}
The authors express their gratitude to the anonymous reviewers for the detailed and constructive comments which were very helpful for the improvement of the presentation of this paper.

\bibliographystyle{abbrv}
\bibliography{Xzx_bergetheta}

\begin{thebibliography}{10}

\bibitem{ARS99}
N.~Alon, L.~R\'{o}nyai, and T.~Szab\'{o}.
\newblock Norm-graphs: variations and applications.
\newblock {\em J. Combin. Theory Ser. B}, 76(2):280--290, 1999.

\bibitem{TopologicalKST2013}
P.~V.~M. Blagojevi\'{c}, B.~Bukh, and R.~Karasev.
\newblock Tur\'{a}n numbers for {$K_{s,t}$}-free graphs: topological
  obstructions and algebraic constructions.
\newblock {\em Israel J. Math.}, 197(1):199--214, 2013.

\bibitem{Bollobas2008DM}
B.~Bollob\'{a}s and E.~Gy\H{o}ri.
\newblock Pentagons vs. triangles.
\newblock {\em Discrete Math.}, 308(19):4332--4336, 2008.

\bibitem{BondyEvenCycle1974}
J.~A. Bondy and M.~Simonovits.
\newblock Cycles of even length in graphs.
\newblock {\em J. Combin. Theory Ser. B}, 16:97--105, 1974.

\bibitem{Brown1966}
W.~G. Brown.
\newblock On graphs that do not contain a thomsen graph.
\newblock {\em Canadian mathematical bulletin = Bulletin canadien de
  mathématiques}, 9(3), 1966.

\bibitem{Bukh2015}
B.~Bukh.
\newblock Random algebraic construction of extremal graphs.
\newblock {\em Bull. Lond. Math. Soc.}, 47(6):939--945, 2015.

\bibitem{Bukh2018}
B.~Bukh and D.~Conlon.
\newblock Rational exponents in extremal graph theory.
\newblock {\em J. Eur. Math. Soc. (JEMS)}, 20(7):1747--1757, 2018.

\bibitem{ZilinCPC2017}
B.~Bukh and Z.~Jiang.
\newblock {A} bound on the number of edges in graphs without an even cycle.
\newblock {\em Combin. Probab. Comput.}, 26(1):1--15, 2017.

\bibitem{BukhTailtheta2018}
B.~Bukh and M.~Tait.
\newblock {T}ur\'{a}n number of theta graphs.
\newblock {\em arXiv preprint}, arXiv: 1804.10014, 2018.

\bibitem{Conlon2014}
D.~Conlon.
\newblock Graphs with few paths of prescribed length between any two vertices.
\newblock {\em Bull. Lond. Math. Soc.}, 51(6):1015--1021, 2019.

\bibitem{1938erdos}
P.~Erd\H{o}s.
\newblock On sequences of integers no one of which divides the product of two
  others and on some related problems.
\newblock {\em Isvestia Nauchno-Issl. Inst. Mat. i Meh. Tomsk}, 2:74--82, 1938.

\bibitem{Erdos1966}
P.~Erd\H{o}s, A.~R\'{e}nyi, and V.~T. S\'{o}s.
\newblock On a problem of graph theory.
\newblock {\em Studia Sci. Math. Hungar.}, 1:215--235, 1966.

\bibitem{1946ErodsBAMS}
P.~Erd\H{o}s and A.~H. Stone.
\newblock On the structure of linear graphs.
\newblock {\em Bull. Amer. Math. Soc.}, 52:1087--1091, 1946.

\bibitem{Faudree1983}
R.~J. Faudree and M.~Simonovits.
\newblock On a class of degenerate extremal graph problems.
\newblock {\em Combinatorica}, 3(1):83--93, 1983.

\bibitem{UpperBoundLemma2018}
D.~Gerbner, A.~Methuku, and C.~Palmer.
\newblock General lemmas for {B}erge-{T}ur\'{a}n hypergraph problems.
\newblock {\em European J. Combin.}, 86:103082, 15, 2020.

\bibitem{ImproveBergeK2t}
D.~Gerbner, A.~Methuku, and M.~Vizer.
\newblock Asymptotics for the {T}ur\'{a}n number of {B}erge-${K}_{2,t}$.
\newblock {\em J. Combin. Theory Ser. B}, 137:264--290, 2019.

\bibitem{GerbnerSIAM2017}
D.~Gerbner and C.~Palmer.
\newblock Extremal results for {B}erge hypergraphs.
\newblock {\em SIAM J. Discrete Math.}, 31(4):2314--2327, 2017.

\bibitem{2017ENDM}
D.~Gr\'{o}sz, A.~Methuku, and C.~Tompkins.
\newblock Uniformity thresholds for the asymptotic size of extremal
  {B}erge-{F}-free hypergraphs.
\newblock {\em Electronic Notes in Discrete Mathematics}, 61:527--533, 2017.

\bibitem{Gyori2006CPC}
E.~Gy\H{o}ri.
\newblock Triangle-free hypergraphs.
\newblock {\em Combin. Probab. Comput.}, 15(1-2):185--191, 2006.

\bibitem{Lemons2012CPC}
E.~Gy\H{o}ri and N.~Lemons.
\newblock Hypergraphs with no cycle of a given length.
\newblock {\em Combin. Probab. Comput.}, 21(1-2):193--201, 2012.

\bibitem{TaitBergetheta2018}
Z.~He and M.~Tait.
\newblock Hypergraphs with few {B}erge paths of fixed length between vertices.
\newblock {\em SIAM J. Discrete Math.}, 33(3):1472--1481, 2019.

\bibitem{JiangJCTB2018}
T.~Jiang and J.~Ma.
\newblock Cycles of given lengths in hypergraphs.
\newblock {\em J. Combin. Theory Ser. B}, 133:54--77, 2018.

\bibitem{Kovari1954}
T.~K\H{o}v\'{a}ri, V.~T. S\'{o}s, and P.~Tur\'{a}n.
\newblock On a problem of {K}. {Z}arankiewicz.
\newblock {\em Colloquium Math.}, 3:50--57, 1954.

\bibitem{KRS96}
J.~Koll\'{a}r, L.~R\'{o}nyai, and T.~Szab\'{o}.
\newblock Norm-graphs and bipartite {T}ur\'{a}n numbers.
\newblock {\em Combinatorica}, 16(3):399--406, 1996.

\bibitem{1995LowerBoundC2l}
F.~Lazebnik, V.~A. Ustimenko, and A.~J. Woldar.
\newblock A new series of dense graphs of high girth.
\newblock {\em Bull. Amer. Math. Soc. (N.S.)}, 32(1):73--79, 1995.

\bibitem{Ma2018}
J.~Ma, X.~Yuan, and M.~Zhang.
\newblock Some extremal results on complete degenerate hypergraphs.
\newblock {\em J. Combin. Theory Ser. A}, 154:598--609, 2018.

\bibitem{1907Mantel}
W.~Mantel.
\newblock Problem 28.
\newblock {\em Wiskundige Opgaven}, 10:60--61, 1907.

\bibitem{Mubayi2002}
D.~Mubayi.
\newblock Some exact results and new asymptotics for hypergraph {T}ur\'{a}n
  numbers.
\newblock {\em Combin. Probab. Comput.}, 11(3):299--309, 2002.

\bibitem{Mubayi2004}
D.~Mubayi and J.~Verstra\"{e}te.
\newblock A hypergraph extension of the bipartite {T}ur\'{a}n problem.
\newblock {\em J. Combin. Theory Ser. A}, 106(2):237--253, 2004.

\bibitem{2012EX14}
T.~A. Terlep and J.~Williford.
\newblock Graphs from generalized {K}ac-{M}oody algebras.
\newblock {\em SIAM J. Discrete Math.}, 26(3):1112--1120, 2012.

\bibitem{1941Turan}
P.~Tur\'{a}n.
\newblock Eine extremalaufgabe aus der graphentheorie.
\newblock {\em Fiz Lapok}, pages 436--452, 1941.

\bibitem{Jacques2016}
J.~Verstra\"{e}te.
\newblock Extremal problems for cycles in graphs.
\newblock In {\em Recent trends in combinatorics}, volume 159 of {\em IMA Vol.
  Math. Appl.}, pages 83--116. Springer, [Cham], 2016.

\bibitem{2019theta43}
J.~Verstra\"{e}te and J.~Williford.
\newblock Graphs without theta subgraphs.
\newblock {\em J. Combin. Theory Ser. B}, 134:76--87, 2019.

\bibitem{WengerC4C6C10}
R.~Wenger.
\newblock Extremal graphs with no {$C^4$}'s, {$C^6$}'s, or {$C^{10}$}'s.
\newblock {\em J. Combin. Theory Ser. B}, 52(1):113--116, 1991.

\bibitem{XuZhangGe2019}
Z.~Xu, T.~Zhang, and G.~Ge.
\newblock Some extremal results on hypergraph {T}ur\'{a}n problems.
\newblock {\em arXiv preprint}, arXiv:1905.01685, 2019.

\end{thebibliography}
\end{document}